\documentclass[11pt, a4paper]{article}
\usepackage{amsmath, amsfonts}
\usepackage[usenames]{color}
\usepackage[english]{babel}
\usepackage[table, dvipsnames]{xcolor}
\usepackage{array}
\newcolumntype{P}[1]{>{\centering\arraybackslash}p{#1}}
\newcolumntype{M}[1]{>{\centering\arraybackslash}m{#1}}
\usepackage{multirow}
\usepackage{tabularx}
\usepackage[all]{xy}
\usepackage{tikz-cd}
\usepackage{tikz}
\usepackage{float}
\usepackage{xcolor}
\usepackage{graphicx}
\usepackage{geometry}
\usepackage{pifont}
\usepackage{mathtools}
\usepackage{mathrsfs}
\usepackage{tcolorbox}
\usepackage[table]{xcolor}
\usepackage{hyperref} 
\hypersetup{bookmarks = true, 
                  colorlinks = true, 
                  linkcolor = blue, 
                  citecolor = blue, 
                  }
\usepackage{makecell}

\usepackage{tcolorbox}
\definecolor{mygray}{rgb}{0.98,0.95,0.75}
\newtcolorbox{mybox}{
    arc=0pt,
    boxrule=0pt,
    colback=mygray,
    width=1\textwidth,   
    colupper=black,
    fontupper=\bfseries
}

\newcommand{\xdownarrow}[1]{%
  {\left\downarrow\vbox to #1{}\right.\kern-\nulldelimiterspace}
}

\newcommand{\xuparrow}[1]{%
  {\left\uparrow\vbox to #1{}\right.\kern-\nulldelimiterspace}
}

\usepackage{graphicx}
\usepackage{amssymb}
\usepackage{amsmath}

\newcommand{\C}{\mathbb{C}}

\newtheorem{theorem}{Theorem}[section]

\newtheorem{definition}[theorem]{Definition}
\newtheorem{example}[theorem]{Example}

\newtheorem{lemma}[theorem]{Lemma}

\newtheorem{proposition}[theorem]{Proposition}
\newtheorem{remark}[theorem]{Remark}

\newenvironment{proof}[1][Proof]{\noindent\textbf{#1.} }{\ \rule{0.5em}{0.5em}}

\title{\textbf{Equisingularity in families of double point curves\footnotetext{\textit{2020 Mathematics Subject Classification}. 14B07 (Primary) 14H20, 14J17, 32S50 (Secondary).}}}
\author{ \ \ \ \\{Silva, O. N.\footnote{otoniel@ufu.br, Universidade Federal de Uberlândia, 38408-144, Uberlândia, MG, Brazil.} $ \ \ $ and  $ \ \ $ Silva Jr, M. M.\footnote{mmsj@academico.ufpb.br, Universidade Federal da Paraíba, 58.051-900, João Pessoa, PB, Brazil.} $ \ \ $}}
\date{}

\begin{document}

\maketitle

\begin{abstract}

In this paper, we provide a systematic comparison between the equisingularity of a 1-parameter unfolding $F = (f_t, t)$ of a finitely determined map germ $f: (\mathbb{C}^2, 0) \to (\mathbb{C}^3, 0)$ and the equisingularity of its associated families of double point curves: $D(F)$, $F(D(F))$, $D^2(F)$, and $D^2(F)/S_2$. We also construct explicit counterexamples to several natural questions concerning the equisingularity of these loci. As a key application, we introduce new families of complete intersection curves — referred to as Henry-type families — which are topologically trivial but fail to satisfy Whitney equisingularity conditions. Finally, we generalize classical double point curve formulas, originally established for map germs from $(\mathbb{C}^2, 0)$ to $(\mathbb{C}^3, 0)$, to the higher-dimensional setting of map germs from $(\mathbb{C}^n, 0)$ to $(\mathbb{C}^{2n-1}, 0)$ for $n \geq 3$, providing the associated curves with a convenient analytic structure.
\end{abstract}

\textbf{Keywords:} Equisingularity, double point curve, finite determinacy.

\section{Introduction}


$ \ \ \ \ $ Let $f:(\mathbb{C}^2,0)\rightarrow (\mathbb{C}^3,0)$ be a finitely determined map germ and consider a $1$-parameter unfolding $F:(\mathbb{C}^{2} \times \mathbb{C},0)\rightarrow (\mathbb{C}^3 \times \mathbb{C},0)$ of $f$ denoted by $F(x,t)=(f_{t}(x),t)$. We assume that the origin is preserved, that is, $f_{t}(0)=0$ for all $t$. We can consider several notions of equisingularity for $F$. In this work, we will deal only with topological triviality, Whitney equisingularity and bi-Lipschitz equisingularity of $F$ (see Section \ref{Unfoldingsection}). A characterization of topological triviality of $F$ was given by Callejas Bedregal, Houston, and Ruas in \cite[Th. 6.2]{ruas-bedregal-houston} (see also \cite[Cor. 40]{bobadilla} and \cite[Th. 4.2]{juanjotomazella}). Some characterizations of the Whitney equisingularity of $F$ have been presented by Gaffney in \cite{gaffney}, Marar and Nuño-Ballesteros in \cite{marar-juanjo-guille} and more recently by the authors in \cite{zariskiotoniel} (see also \cite{slice} for the corank $1$ case). 

By Thom's second isotopy lemma for complex analytic maps (see \cite[Th. $5.2$]{ref5}), every unfolding $F$ of $f$ which is Whitney equisingular is also topologically trivial. However, we know that the converse does not hold in general (see \cite[Ex. $5.5$]{ruas-otoniel}). In \cite[Th. 4.19]{zariskiotoniel}, the authors showed that if $f$ has corank $1$ and $F$ is a bi-Lipschitz equisingular unfolding of $f$, then $F$ is Whitney equisingular. Thus, a natural question is: if $F$ is Whitney equisingular, is
it also bi-Lipschitz equisingular? The authors presented a negative answer to this question in the corank 2 case (see \cite[Prop. 4.18]{zariskiotoniel}). In this work, we present a negative answer to this question also in the corank 1 case (see Proposition \ref{lemma7}). Therefore, in our setting, for $f$ of any corank we have the situation described in Figure \ref{diagram}.

\begin{figure}[h!]
\centering    
\begin{tikzpicture}
[
  node distance=35mm and 45mm,
  box/.style={
    rounded corners=3pt, 
    draw, 
    align=center, 
    minimum width=45mm, 
    minimum height=10mm, 
    inner sep=4pt,
    fill=mygray
  },
  imply/.style={->, thick, shorten >=4pt, shorten <=4pt},
  equiv/.style={<->, thick, shorten >=4pt, shorten <=4pt},
  notimply/.style={->, thick, dashed, shorten >=4pt, shorten <=4pt},
  unknown/.style={->, thick, dotted, shorten >=4pt, shorten <=4pt},
  diagimply/.style={->, thick, shorten >=8pt, shorten <=8pt},
  diagnot/.style={->, thick, dashed, shorten >=8pt, shorten <=8pt},
  diagunk/.style={->, thick, dotted, shorten >=8pt, shorten <=8pt}
]

\node[box] (A) at (-6,0) {$F$ is bi-Lipschitz\\ equisingular};
\node[box] (B) at (0,0)   {$F$ is Whitney \\ equisingular };
\node[box] (C) at (6,0)    {$F$ is topologically \\ trivial };

\node (D) at (-3,0.2)    {$\overset{?}{\Longrightarrow}$};
\node(E) at (-3,-0.2) {$\Longleftarrow$};
\node(F) at (3,0.2) {$\Longrightarrow$};
\node (G) at (3,-0.2) {$\Longleftarrow$};
\node(H) at (-3,-0.2) {/};
\node(I) at (3,-0.2) {/};







\end{tikzpicture}
\caption{Implication diagram for Ruas’ conjecture.}
\label{diagram}
\end{figure}

In the study of the equisingularity of $F$, an important tool used by several authors is the family of double point curves $D^2(F)$ contained in $\mathbb{C}^2\times\mathbb{C}^2$ and its projection on the first factor, the family of plane curves $D(F)$ (see Section \ref{sec2.1}). For instance, Callejas Bedregal, Houston, and Ruas showed in \cite{ruas-bedregal-houston} (see also \cite[Cor. 40]{bobadilla}) that $F$ is topologically trivial if and only if the Milnor number of $D(f_t)$ is constant along the parameter space. Since the constancy of $\mu(D(f_t),0)$ is equivalent to the topological triviality of $D(F)$ (see, for instance, \cite[Th. 5.3.1]{buch}, in a certain sense, the study of the topological triviality of the family of surfaces $f_t(\mathbb{C}^2)$ can be reduced to a simple family of plane curves (in the source of $F$). 


Other important families of curves in this context are the image of the double point curve family, denoted by $F(D(F))$, and the family $D^2(F)/S_2$ (see Section \ref{sec2.1} for details). Our main goal in this work is to clarify how the equisingularity properties of an unfolding $F$ control (and fail to control) the geometry of the associated double point locus. We analyze the families $D(F), F(D(F)), D^2(F),$ and $D^2(F)/S_2,$ obtaining explicit counterexamples. In our context, a natural question is the following:

\begin{mybox}
\textbf{Question 1:} Suppose that $F=(f_t,t)$ is equisingular in some sense. What can we say about the equisingularity of the families of curves $D(F)$, $F(D(F))$, $D^2(F),$ and $D^2(F)/S_2$?
\end{mybox}

For instance, if ``equisingular'' means ``topologically trivial'' then Callejas Bedregal, Houston, and Ruas showed in \cite{ruas-bedregal-houston} that all the four families of double point curves $D(F)$, $D^2(F)$, $D^2(F)/S_2$ and $F(D(F))$ are topologically trivial. For the family $D^2(F)$, we can say more; we show that $D^2(F)$ is Whitney equisingular (see Lemma \ref{lemma1}), which is our first contribution.

Now, what happens if ``equisingular'' means ``Whitney equisingular'' (respectively, ``bi-Lipschitz equisingular'')? Is it true that $D(F)$, $F(D(F))$, $D^2(F),$ and $D^2(F)/S_2$ are also Whitney equisingular (respectively, bi-Lipschitz equisingular)? Since $D(F)$ is a family of plane curves, it is well known that these three notions of equisingularity for $D(F)$ are equivalent (see, for instance, \cite[Th. 5.3.1]{buch} and \cite[Sec. VI]{pham2}). Thus, our study is reduced to only the three remaining families, $F(D(F))$, $D^2(F),$ and $D^2(F)/S_2$.

If $F$ is Whitney equisingular, then $F(D(F))$ and $D^2(F)$ are also Whitney equisingular (see Lemma \ref{lemma8}). However, we show that $D^2(F)/S_2$ may not be Whitney equisingular and $D^2(F)$ may not be bi-Lipschitz equisingular (see Proposition \ref{lemma2}), which is our second contribution. On the other hand, in some special cases we can ensure the bi-Lipschitz equisingularity of $D^2(F)$. For example, we show that if $f$ is homogeneous and $F$ is topologically trivial, then $D^2(F)$ is bi-Lipschitz equisingular (see Theorem \ref{lemma3}), which is our third contribution.  

In \cite{thuy}, Brasselet, Ruas, and Thuy studied the bi-Lipschitz equivalence for a pair of map germs. In particular,  it follows by \cite[Prop. 3.1]{thuy} that if $F$ is bi-Lipschitz equisingular, then $D^2(F)$ and $F(D(F))$ are also bi-Lipschitz equisingular. However, in this work we show that the bi-Lipschitz equisingularity of an unfolding $F$ does not imply that $D^2(F)/S_2$ is Whitney equisingular or bi-Lipschitz equisingular (see Proposition \ref{prop3.12}), which is our fourth contribution. 

Among the four curves $D(f), D^2(f), f(D(f)),$ and $D^2(f)/S_2$, the curve $D^2(f)/S_2$ is certainly the least studied. However, it is important in the sense that it is possible to build families with properties that are interesting from the point of view of equisingularity. Henry showed that the family of curves $\textbf{X}_t\subset\mathbb{C}^3$ given by the quasihomogeneous complete intersection \begin{equation}\label{type}
    xy-tz=0\ \ \ \ \text{and}\ \ \ \ z^6+x^{15}+y^{10}=0
\end{equation} is topologically trivial, but is not Whitney equisingular (see, for instance, \cite[Ex. V.1]{Briancon}). Similarly to the classical examples of Briançon-Speder \cite{Speder}, which consist of $\mu$-constant families of singular surfaces in $\mathbb{C}^3$ that are topologically trivial but fail to satisfy the Whitney conditions, Henry’s family of curves has frequently appeared in the literature as a source of counterexamples to fundamental problems regarding equisingularity (cf. \cite{Briancon,buch,victor-juanjo}). Its ability to illustrate the subtle failure of Whitney conditions in topologically trivial settings makes it a central object of study in equisingularity theory. Inspired by Henry's example, we say that a family of curves $\textbf{X}_t\subset\mathbb{C}^3$ is of ``\textit{Henry-type}" if each $\textbf{X}_t$ is a quasihomogeneous complete intersection and the family $\textbf{X}_t$ is topologically trivial, but is not Whitney equisingular. In this sense, another natural question arises in this context:

\begin{mybox}
    \textbf{Question 2:} How can we introduce new families of curves of Henry-type?
\end{mybox}

In this work, using the family of curves $D^2(F)/S_2$ for a suitable unfolding $F$, we present an answer to the Question 2 (see Proposition \ref{prop1} and Proposition \ref{prop2}), which are considered our fifth contribution. 

In the second part of this work we generalize some classical double point curve formulas, initially studied for the $(\mathbb{C}^2,0)$ to $(\mathbb{C}^3,0)$ case, to the higher dimensional setting of a map germ from $(\mathbb{C}^n,0)$ to $(\mathbb{C}^{2n-1},0)$, with $n\ge 3.$ When $f$ is a finitely determined map germ from $(\mathbb{C}^2,0)$ to $(\mathbb{C}^3,0)$, Marar and Mond showed in \cite{marar-mond} (see also \cite{marar-juanjo-guille} for the case where $f$ has corank $2$) that
\begin{equation}\label{eq1}
\mu(D^2(f))=2\mu(D^2(f)/S_2)+C(f)-1
\end{equation}

\noindent where $C(f)$ denotes the number of crosscaps that appear in a stabilization of $f$ near the origin. They also showed that
\begin{equation}\label{eq2}
\mu(D(f))=\mu(D^2(f))+6T(f)  \ \ \ and \ \ \ \mu(D(f))=2\mu(f(D(f)))+C(f)-2T(f)-1
\end{equation}

\noindent where $T(f)$ denotes the number of triple points that appear in a stabilization of $f$ near the origin. In 2008, Jorge Pérez and Nuño-Ballesteros showed that the formula in (\ref{eq1}) also works for finitely determined map germs from $(\mathbb{C}^n,0)$ to $(\mathbb{C}^{2n-1},0)$ with $n\geq 3$ (see \cite[Th. 1]{victor-juanjo}). They also considered a (not necessarily reduced) analytic structure for $D(f)$ and they showed that the formula on the left side of (\ref{eq2}) works only in some special cases (see \cite[Th. 2]{victor-juanjo}). Note that if $n \geq 3$, then $f$ has no ordinary triple points and therefore $T(f)=0$ in (\ref{eq2}). So a natural question is:

\begin{mybox}
\textbf{Question 3:} Let $f$ be a finitely determined map germ from $(\mathbb{C}^n,0)$ to $(\mathbb{C}^{2n-1},0)$, with $n\geq 3$. Can we give a convenient analytic structure for the curves $D(f)$ and $f(D(f))$ such that both formulas in (\ref{eq2}) hold?
\end{mybox}

We finish this work presenting a convenient analytic structure for the curves $D(f)$ and $f(D(f))$ and we also provide a positive answer to Question $3$ (see Theorem \ref{TEO2}), which is our last contribution.

\section{Preliminaries}

$ \ \ \ \ $ In this work, we assume that $f:(\mathbb{C}^2,0)\rightarrow(\mathbb{C}^{3},0)$ is a finite, generically $1-1$, holomorphic map germ, unless otherwise stated. For any such finite map $f:\mathbb{C}^2\rightarrow \mathbb{C}^{3}$, we denote by $(x,y)$ and $(X,Y,Z)$ the coordinate systems at the source $\mathbb{C}^2$ and the target $\mathbb{C}^{3}$, respectively. Throughout the paper, we use the standard notation of singularity theory as the reader can find in \cite{mond-juanjo-livro}. Throughout this work, we always consider the ambient metric (also referred to as the outer metric).

\subsection{Double point spaces for map germs}\label{sec2.1}

$ \ \ \ \ $ Multiple point spaces of a map germ from $(\mathbb{C}^n,0)$ to $(\mathbb{C}^p,0)$ with $n\leq p$ play an important role in the study of its geometry. In this section, we will deal only with double points, which is a fundamental notion in the setting considered in this work.  

We are interested in studying the space of double points, which is denoted by $D^2(f)$ and its projection on the source of $f$ which is denoted by $D(f)$. Roughly speaking, $D^2(f)$ is the set of points $(\textbf{x},\textbf{x}^{'}) \in \mathbb{C}^n \times \mathbb{C}^n$ such that $\textbf{x} \neq \textbf{x}^{'}$ and $f(\textbf{x})=f(\textbf{x}^{'})$ or $\textbf{x}$ is a singular point of $f$. In order to see $D^2(f)$ as an analytic space, we need to present an appropriate analytic structure for it. We will follow the construction of \cite{mond87}, which is valid for holomorphic maps from $\mathbb{C}^n$ to $\mathbb{C}^p$, with $n\leq p$. 

Let us denote the diagonals of $\mathbb{C}^n \times \mathbb{C}^n$ and $\mathbb{C}^{p} \times \mathbb{C}^{p}$ by $\Delta_{\mathbb{C}^n}$ and $\Delta_{\mathbb{C}^{p}}$, respectively, and denote the sheaves of ideals defining them by $\mathcal{I}_n$ and $\mathcal{I}_{p}$, respectively. We write the points of $\mathbb{C}^n \times \mathbb{C}^n$ and $\mathbb{C}^{p} \times \mathbb{C}^{p}$ as $(\underline{x},\underline{x}^{'})$ and $(\underline{X},\underline{X}^{'})$, respectively.
Locally, 

\begin{center}
$\mathcal{I}_{n}=\langle x_1-x_1^{'},\cdots, x_n-x_n^{'} \rangle$ and $\mathcal{I}_{p}=\langle X_1-X_1^{'},\cdots, X_{p}-X_{p}^{'} \rangle$. 
\end{center}

Since the pull-back $(f \times f)^{\ast}\mathcal{I}_{p}$ is contained in $\mathcal{I}_{n}$ , there exist $\alpha_{ij}\in \mathcal{O}_{\mathbb{C}^{n} \times \mathbb{C}^n}$, such that
\[
f_{i}(\underline{x})-f_{i}(\underline{x}^{'})= \alpha_{i1}(\underline{x},\underline{x}^{'})(x_1-x_1^{'})+ \cdots + \alpha_{in}(\underline{x},\underline{x}^{'})(x_n-x^{'}_n), \ for \ i=1,\cdots,p.
\]

If $f(\underline{x})=f(\underline{x}^{'})$ and $\underline{x} \neq \underline{x}^{'}$, then every $n \times n$ minor of the matrix $\alpha=(\alpha_{ij})$ must vanish at $(\underline{x},\underline{x}^{'})$. We denote by $\mathcal{R}(\alpha)$ the ideal in $\mathcal{O}_{\mathbb{C}^{p}}$ generated by the $n\times n$ minors of $\alpha$. Then we define the \textit{double point space $D^2(f)$} (as a complex space) by
\[
D^{2}(f)=V((f\times f)^{\ast}\mathcal{I}_{p}+\mathcal{R}(\alpha)).
\]

Although the ideal $\mathcal{R}(\alpha)$ depends on the choice of the coordinate functions of $f$, in \cite{mond87} it is proved that $\mathcal{I}^{2}(f)$ does not, and so $D^{2}(f)$ is well defined. It is easy to see that the points in the underlying set of $D^{2}(f)$ are exactly the ones of type $(\underline{x},\underline{x}^{'})$ with $\underline{x} \neq \underline{x}^{'}$, $f(\underline{x})=f(\underline{x}^{'})$ and the ones of type $(\underline{x},\underline{x})$ such that $\underline{x}$ is a singular point of $f$.

Let $f:(\mathbb{C}^n,0)\rightarrow(\mathbb{C}^{p},0)$ be a finite map germ and take a representative of $f$ defined on a  sufficiently small open neighborhood of the origin. Denote by $I_{p}$ and $R(\alpha)$ the stalks at $0$ of $\mathcal{I}_{p}$ and $\mathcal{R}(\alpha)$. We define the \textit{double point space of the map germ $f$} as the complex space germ 

\[
(D^{2}(f),0)=(V((f \times f)^{\ast}I_{p}+R(\alpha)),0)
\]

Now we consider dimensions $n=2$ and $p=3$. Other important spaces for studying the topology of $f(\mathbb{C}^{2})$ are the double point curve $D(f)$ in the source and its image under $f$, denoted by $f(D(f))$. According to the literature (see \cite{mond-pellikaan}), an appropriate analytic structure for these curves is the one given by Fitting ideals. In the sequel, $f_{\ast}\mathcal{O}_2$ denotes $\mathcal{O}_2$ considered as an $\mathcal{O}_3$-module, via composition with $f$, and $Fitt_k(f_{\ast}\mathcal{O}_2)$ is its $k$th-Fitting ideal. The analytic structure of these curves is defined more precisely in the following definition:

\begin{definition} Let $U \subset \mathbb{C}^2$ and $V \subset \mathbb{C}^{3}$ be open sets. Suppose that the map $f:U\rightarrow V$ is finite, that is, holomorphic, closed and finite-to-one. 

\begin{flushleft}
 (a) Let ${\pi}|_{D^2(f)}:D^2(f) \subset U \times U \rightarrow U$ be the restriction to $D^2(f)$ of the projection $\pi$ which is a projection onto the first factor. The \textit{double point space} is the complex space
\end{flushleft}

\begin{center}
$D(f)=V(Fitt_0({\pi}_{\ast}\mathcal{O}_{D^2(f)}))$.
\end{center}

\noindent Set theoretically we have the equality $D(f)=\pi(D^{2}(f))$.\\

\noindent(b) The \textit{double point space in the target} is the complex space $f(D(f))=V(Fitt_1(f_{\ast}\mathcal{O}_2))$. Notice that the underlying set of $f(D(f))$ is the image of $D(f)$ by $f$.

\end{definition}

\begin{remark}\label{remarkdimdf} (a) If $f:U \subset \mathbb{C}^2 \rightarrow V \subset \mathbb{C}^3 $ is finite and generically $1$-to-$1$, then $D^2(f)$ is Cohen-Macaulay and has dimension $1$ \rm\textit{(see} \rm\cite[\textit{Prop.} \rm 2.1]{marar-juanjo-guille}\textit{). Hence, $D^2(f)$, $D(f)$ and $f(D(f))$ are curves.}\\

\noindent\textit{(b) In our study, we will also consider the quotient double point space $D^2(f)/S_2$ of the double point space $D^2(f)$ by the action of the permutation group $S_2$.}

\textit{Assume that $G$ is a finite group, which acts linearly on $\mathbb{C}^n$. This action induces an analytic structure on the quotient $\mathbb{C}^n/G$ so that the local ring at a point $z\in\mathbb{C}^n$ is given by $$\mathcal{O}^G_{n,z}=\lbrace h\in\mathcal{O}_{n,z}:gh=h,\forall g\in G\rbrace.$$}

\textit{Assume now that $I\subset\mathcal{O}_{n,z}$ is an $G$-invariant ideal. Then $G$ acts also on the germ of analytic set $X=V(I)\subset(\mathbb{C}^n,z)$ and gives again an analytic structure on $X/G$ with local ring $$\mathcal{O}_X^G=\lbrace h\in\mathcal{O}_X: gh=h,\forall g\in G\rbrace,$$ where $\mathcal{O}_X=\mathcal{O}_{n,z}/I$, in such a way that $X/G$ embeds naturally in $(\mathbb{C}^n/G,z).$ If $I$ is generated by $G$-invariant functions $a_1,\dots,a_r\in\mathcal{O}_{n,z},$ then $$\mathcal{O}_X^G\equiv\mathcal{O}_{n,z}^G/I^G,$$ where $I^G$ is the ideal in $\mathcal{O}_{n,z}^G$ generated by the same functions $a_1,\dots,a_r.$ Since $\mathcal{O}_X^G$ is in fact a subring of $\mathcal{O}_X,$ we have that if $X$ is reduced, then $X/G$ is also reduced.}

\textit{In our case, if $f$ is a holomorphic map or map germ from $\mathbb{C}^2$ to $\mathbb{C}^3$, then the double point ideal $\mathcal{I}^2(f)$ is $S_2$-invariant, where we consider the action
of the permutation group $S_2$ on $\mathbb{C}^2\times\mathbb{C}^2$ given by} \rm{$\tau(\textbf{x},\textbf{x}')=(\textbf{x}',\textbf{x}).$} \textit{In this way, we can define the quotient complex space or complex space germ $D^2(f)/S_2$. It is a well known fact that $\mathbb{C}^2\times\mathbb{C}^2/S_2$ is isomorphic to $\mathbb{C}^2$ times a quadratic cone in $\mathbb{C}^3$. In particular, $D^2(f)/S_2$ embeds in $\mathbb{C}^5$ (see \rm{\cite[\textit{Sec.} 3]{marar-mond}}}\textit{, see also} \rm{\cite[\textit{Sec.} 2.4]{teseguille}}\textit{).}\\

\noindent\textit{(c) We have the following commutative diagram \begin{figure}[H]
        \begin{center}
        \begin{tikzpicture}
            \draw (0,0) node {$D^2(f)$};
            \draw[->] (1,0) to (2.5,0);
            \draw (3.5,0) node {$D^2(f)/S_2$};
            \draw[->] (3.5,-0.5) to (3.5,-2);
            \draw (3.5,-2.5) node {$f(D(f)),$};
            \draw[->] (1,-2.5) to (2.5,-2.5);
            \draw (0,-2.5) node {$D(f)$};
            \draw[->] (0,-0.5) to (0,-2);
        \end{tikzpicture}
        \end{center}
        \end{figure} \noindent where the columns are generically 1-to-1 and the lines are generically 2-to-1. Furthermore, the definitions above are the same for map germs from $(\mathbb{C}^n,0)$ to $(\mathbb{C}^{2n-1},0),$ with $n\ge3,$ and we have that the columns in the diagram are generically 1-to-1.}
\end{remark}

\subsection{Finite determinacy, cross caps and triple points}

$ \ \ \ \ $ The notion of a finitely determined map germ from $(\mathbb{C}^2,0)$ to $(\mathbb{C}^3,0)$ is central to this study, and we begin by providing its classical definition. Furthermore, the number of cross-caps and triple points appearing in a stabilization of such germs plays a fundamental role throughout this paper.

\begin{definition}(a) Two map germs $f,g:(\mathbb{C}^2,0)\rightarrow (\mathbb{C}^3,0)$ are $\mathcal{A}$-equivalent, denoted by $g\sim_{\mathcal{A}}f$, if there exist germs of diffeomorphisms $\Phi:(\mathbb{C}^2,0)\rightarrow (\mathbb{C}^2,0)$ and $\Psi:(\mathbb{C}^3,0)\rightarrow (\mathbb{C}^3,0)$, such that $g=\Psi \circ f \circ \Phi$.\\

\noindent(b) A map germ $f:(\mathbb{C}^2,0) \rightarrow (\mathbb{C}^3,0)$ is $\mathcal{A}$-finitely determined (or ``finitely determined'' for simplicity) if there exists a positive integer $k$ such that for any $g$ with $k$-jets satisfying $j^kg(0)=j^kf(0)$ we have $g \sim_{\mathcal{A}}f$.

\end{definition}

\begin{remark}\label{remarktriplepoints} Consider a finite map germ $f:(\mathbb{C}^2,0)\rightarrow (\mathbb{C}^3,0)$. By Mather-Gaffney criterion \rm(\cite[\textit{Th}. \rm 2.1]{Wall})\textit{, $f$ is finitely determined if and only if there is a finite representative $f:U \rightarrow V$, where $U\subset \mathbb{C}^2$, $V \subset \mathbb{C}^3$ are open neighborhoods of the origin, such that $f^{-1}(0)=\lbrace 0 \rbrace$ and the restriction $f:U \setminus \lbrace 0 \rbrace \rightarrow V \setminus \lbrace 0 \rbrace$ is stable.} 

\textit{This means that the only singularities of $f$ on $U \setminus \lbrace 0 \rbrace$ are cross-caps (or Whitney umbrellas), transverse double and triple points. By shrinking $U$ if necessary, we can assume that there are no cross-caps nor triple points in $U$. Then, since we are in the nice dimensions of Mather }\rm(\cite[\textit{p}. \rm 208]{Mather})\textit{, we can take a stabilization $F$ of $f$}, 

\begin{center}
$F:U \times T \rightarrow \mathbb{C}^4$, $F(x,y,t)=(f_{t}(x,y),t)$, 
\end{center}

\noindent \textit{where $T$ is a neighborhood of $0$ in $\mathbb{C}$. It is well known that the number of cross-caps of $f_t$ (denoted by $C(f)$) and the number of triple points of $f_t$ (denoted by $T(f)$) are independent of the particular choice of the stabilization} \rm\textit{(see for instance} \rm\cite{mond7}\textit{).} \textit{These are analytic invariants of $f$ (see} \rm\cite{mond87}\textit{).}

\end{remark}

We remark that the space $D(f)$ plays a fundamental role in the study of the finite determinacy. In \cite[Th. 2.14]{marar-mond}, Marar and Mond presented necessary and sufficient conditions for a corank $1$ map germ from $(\mathbb{C}^n,0)$ to $(\mathbb{C}^p,0)$ to be finitely determined in terms of properties of $D^2(f)$ and other multiple points spaces. In the case where $n=2$ and $p=3$, Marar, Nu\~{n}o-Ballesteros and Pe\~{n}afort-Sanchis extended this criterion of finite determinacy to the corank $2$ case (see \cite{marar-juanjo-guille}). They proved the following result.

\begin{theorem}\rm(\cite{marar-juanjo-guille}, \cite{marar-mond})\label{criterio} \textit{
Let $f:(\mathbb{C}^2,0)\rightarrow(\mathbb{C}^{3},0)$ be a finite and generically $1$ $-$ $1$ map germ. Then $f$ is finitely determined if and only if the Milnor number of $D(f)$ at $0$ is finite.}
\end{theorem}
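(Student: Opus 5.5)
The plan is to reduce both directions to the Mather--Gaffney criterion recalled in Remark \ref{remarktriplepoints}. The key translation is that, away from the origin, the local structure of $D(f)$ detects exactly the non-stable behaviour of $f$. Since $D(f)$ is a plane curve (it is defined by a single equation, $\mathrm{Fitt}_0$ of a Cohen--Macaulay module of dimension one by Remark \ref{remarkdimdf}(a), so principal), the condition $\mu(D(f),0)<\infty$ is equivalent to $D(f)$ being reduced with an isolated singularity at $0$. Concretely, we must show that for a representative, $D(f)\setminus\{0\}$ is a smooth reduced curve if and only if $f|_{U\setminus\{0\}}$ is stable and $f^{-1}(0)=\{0\}$. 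The condition $f^{-1}(0)=\{0\}$ is automatic from finiteness after shrinking $U$.

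\textbf{Direction ($\Rightarrow$).} If $f$ is finitely determined, then off the origin $f$ has only immersive points, transverse double points, cross-caps and triple points. Shrinking $U$ as in Remark \ref{remarktriplepoints}, only regular points, transverse double points, and possibly cross-caps and triple points remain; the latter are isolated and can be excluded near $0$. I will compute $D^2(f)$ and $D(f)$ locally at each stable type using normal forms.
\begin{itemize}
\item At a transverse double point, $D(f)$ is locally a reduced smooth branch, since it is the preimage of a transverse intersection of two immersed sheets.
\item At a cross-cap $(x,y)\mapsto(x,y^2,xy)$, one gets $\alpha$ explicitly and $D(f)=V(x)$, which is smooth and reduced.
\end{itemize}
Here I use that $D^2(f)$ and the Fitting ideal construction are local and compatible with $\mathcal{A}$-equivalence, by Mond's independence result. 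Hence $D(f)$ is reduced and smooth off $0$, so $\mu(D(f),0)<\infty$.

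\textbf{Direction ($\Leftarrow$).} Suppose $\mu(D(f),0)<\infty$, so $D(f)\setminus\{0\}$ is smooth and reduced near $0$. Take $x\neq0$ in $U$ and split into cases.
\begin{itemize}
\item \emph{$x\notin D(f)$.} Then $f$ is an immersion at $x$ and $x$ is not identified with any other point, so $f$ is stable there.
\item \emph{$x\in D(f)$ with $f$ singular at $x$.} Then $f$ has corank $1$ at $x$ (corank $2$ would force $D(f)$ to be singular at $x$, which I will check using the degree of the minors of $\alpha$). Using the Marar--Mond corank-one description, write $f=(u,p(u,y),q(u,y))$. Smoothness and reducedness of $D(f)$ at $x$ forces the local germ to be $\mathcal{A}$-equivalent to a cross-cap, since any worse corank-one germ (e.g.\ $S_k$, $B_k$, $H_k$) has $D(f)$ singular or non-reduced at its base point.
\item \emph{$x\in D(f)$ with $f$ immersive at $x$.} Then there are points $x'\neq x$ with $f(x')=f(x)$, and $D(f)$ near $x$ is the union, over such $x'$, of the preimages of the other sheets. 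Smoothness rules out three sheets: a triple point would give two branches of $D(f)$ through $x$, so $\mu>0$ there, contradicting smoothness. Reducedness of the Fitting ideal structure forces the two sheets to be transverse, since tangency produces multiplicity $\ge2$ in $\mathrm{Fitt}_0$.
\end{itemize}
Hence $f$ is stable on $U\setminus\{0\}$, and Mather--Gaffney gives finite determinacy.

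The main obstacle is the corank-one singular case of the converse: proving that a non-cross-cap singular point forces $D(f)$ to be non-reduced or singular. I would handle it via the Marar--Mond criterion that a corank-one germ is stable iff its multiple point spaces $D^k$ are smooth of expected dimension. Here one needs only $k=2$, and $D^2$ smooth follows from $D(f)$ reduced and smooth, because $D^2(f)\to D(f)$ is a finite map that is locally an isomorphism at such points. I would also need a short argument that corank $2$ cannot occur at points of $U\setminus\{0\}$. This should follow because at a corank-$2$ point the ideal $\mathcal{R}(\alpha)$ lies in $\mathfrak{m}^2$, which makes $D(f)$ singular there.
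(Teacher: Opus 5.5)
The paper does not prove this theorem; it quotes it from \cite{marar-mond} (corank $1$) and \cite{marar-juanjo-guille}. Your plan is the standard route: use Mather--Gaffney to reduce to showing that $D(f)\setminus\{0\}$ is smooth and reduced exactly when $f$ is stable off the origin. Your forward direction is fine. The converse has a gap precisely at the step you call the main obstacle. You assert, without proof, that $\pi\colon D^2(f)\to D(f)$ is a local isomorphism over points where $D(f)$ is smooth and reduced, and the whole converse rests on this. Without it:
\begin{itemize}
\item the singular case is not established, since the list $S_k,B_k,H_k$ covers only simple germs;
\item the corank-$2$ remark shows that $D^2(f)$ is singular at $(x,x)$, not that $D(f)$ is singular at $x$;
\item your case analysis never excludes a singular point of $f$ whose image is also hit by another sheet, such as a cross-cap plus a transverse plane, which is not stable. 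Your singular case only identifies the monogerm at $x$, and your immersive case tacitly assumes the partner point is immersive too.
\end{itemize}
Also, tangency of two immersed sheets violates smoothness of $D(f)$, not reducedness: tangent sheets can meet in a reduced node.

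The missing idea is the Cohen--Macaulay property of $D^2(f)$ from Remark~\ref{remarkdimdf}(a). Let $x\neq 0$ be a point where $\mathrm{Fitt}_0(\pi_*\mathcal{O}_{D^2(f)})=(g)$ with $dg(x)\neq 0$. Let $M=\bigoplus_{z\in\pi^{-1}(x)}\mathcal{O}_{D^2(f),z}$ be the stalk at $x$.
\begin{itemize}
\item Since $\mathrm{Fitt}_0(M)\subseteq\mathrm{Ann}(M)$, $M$ is a module over the discrete valuation ring $\mathcal{O}_{2,x}/(g)$.
\item $M$ has depth one because $D^2(f)$ is Cohen--Macaulay of dimension one. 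Hence $M$ is torsion free, hence free of some rank $r$.
\item Then $\mathrm{Fitt}_0(M)=(g^r)$, which forces $r=1$.
\end{itemize}
So $M$ is cyclic. Therefore $\pi^{-1}(x)$ is a single point $z$, and $\mathcal{O}_{D(f),x}\to\mathcal{O}_{D^2(f),z}$ is an isomorphism; in particular $D^2(f)$ is smooth at $z$.

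All cases now follow at once.
\begin{itemize}
\item If $f$ is singular at $x$, then $z=(x,x)$, so $f^{-1}(f(x))=\{x\}$. Corank $2$ is impossible: $\alpha(x,x)=df_x=0$ puts all generators of the ideal of $D^2(f)$ in $\mathfrak{m}^2_{(x,x)}$.
\item In corank $1$, write $f=(u,p,q)$ with $p,q\in\mathfrak{m}^2$. Smoothness of $D^2(f)$ at $(x,x)$ says the linear parts of the divided differences $[p],[q]$ are independent. That is, $j^2f\sim_{\mathcal{A}}(u,v^2,uv)$, a cross-cap.
\item If $f$ is immersive at $x$, then $z=(x,x')$ for a unique $x'\neq x$. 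Moreover $x'$ is immersive: otherwise the fibre of $\pi$ over $x'\neq 0$ would contain both $(x',x')$ and $(x',x)$, contradicting the above at $x'$.
\item Off the diagonal the minors of $\alpha$ lie in $(f\times f)^*I_3$. So the Zariski tangent space of $D^2(f)$ at $(x,x')$ is $\{(w,w'):df_xw=df_{x'}w'\}$. This is one-dimensional exactly when the two tangent planes are transverse.
\end{itemize}
This closes every case without classification lists or the general Marar--Mond criterion.
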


\subsection{Classical notions of equisingularity in families of curves}\label{unfoldingsection}

$ \ \ \ \ $ Let us begin the section by reviewing the concepts of topological triviality, Whitney equisingularity and bi-Lipschitz equisingularity of families of curves, and stating
the main criteria we will be using. Consider a germ of a reduced, equidimensional complex surface $(\textbf{X},0)$ in $(\mathbb{C}^n,0)$ together with a flat analytic map $p:(\textbf{X},0)\to(\mathbb{C},0)$, and let $p:\textbf{X}\to T$ be a representative, where $T$ is an open neighborhood of $0$ in $\mathbb{C}$. The surface $\textbf{X}$ can be viewed as a flat $1$-parameter deformation of the curve $\textbf{X}_0 := p^{-1}(0)$. Suppose that the singular locus of $\textbf{X}$ is smooth of dimension one, and that there exists a section $\sigma:T\to \textbf{X}$, such that the image of $\sigma$ is smooth and each fiber $\textbf{X}_t:=p^{-1}(t)$ has a unique singular point at $\sigma(t)$ (see Figure \ref{Figure3}). 

\begin{figure}[!h]
        \centering
\includegraphics[scale=0.5]{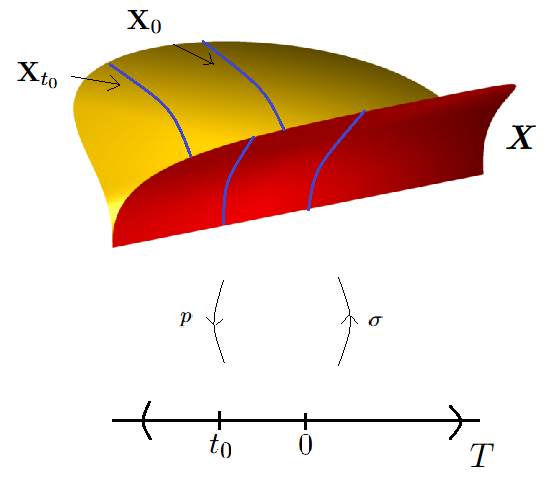} 
       \caption{The notion of a family of curves.}
        \label{Figure3}
    \end{figure}	

    \begin{definition}\label{deftoptrivial} Let $p:(\textbf{X},0) \rightarrow (\mathbb{C},0)$ be a family of reduced curves and suppose that there is a good representative $p:\textbf{X}\rightarrow T$ with a section $\sigma:T \rightarrow \textbf{X}$, such that both $\sigma(T)$ and $\textbf{X}_t \setminus \sigma(t)$ are smooth for $t \in T$.\\ 

\noindent \textit{(a) We say that $p: \textbf{X}\rightarrow T$ is topologically trivial (or for simplicity, $\textbf{X}$ is topologically trivial) if there is a homeomorphism $h: \textbf{X} \rightarrow \textbf{X}_{0} \times T$ such that $p=p^{'} \circ h$, where $p^{'} :\textbf{X}_{0} \times T \rightarrow T$ is the projection on the second factor}.\\

\noindent \textit{(b) Let $p:\textbf{X} \rightarrow T$ be a topologically trivial family of reduced curves. If in addition, $h$ is bi-Lipschitz (that is, $h$ and its inverse $h^{-1}$ are Lipschitz maps with the ambient metric), then we say that $\textbf{X}$ is bi-Lipschitz equisingular.}\\

\noindent \textit{(c) We say that $p:\textbf{X}\rightarrow T$ is Whitney equisingular (or for simplicity, $\textbf{X}$ is Whitney equisingular) if the stratification} $ \lbrace \textbf{X} \setminus \sigma(T), \sigma(T) \rbrace $ \textit{satisfies  Whitney's condition (b) at $0$, that is:}\\

\noindent\textit{(Whitney's condition b): For any sequences of points $(x_n)\subset \textbf{X} \setminus \sigma(T)$ and $(t_n) \subset \sigma(T) \setminus \lbrace 0 \rbrace$, both converging to $0$, and such that the sequence of lines $(x_nt_n)$ converges to a line $l$ and the sequence of directions of tangent spaces $T_{x_n}\textbf{X}$, to $\textbf{X}$ at $x_n$, converges to a linear space $H$ we have that the line $l$ is contained in $H$.}       
    \end{definition}

A classical result asserts that $\textbf{X}$ is topologically trivial if and only if the Milnor number $\mu(\textbf{X}_t,\sigma(t))$ of its fibers remains constant along $\sigma(T)$ (see \cite[Th. $5.2.2$]{buch}). Furthermore, $\textbf{X}$ is Whitney equisingular precisely when it is topologically trivial and the multiplicity $m(\textbf{X}_t,\sigma(t))$ of $\textbf{X}_t$ at $\sigma(t)$ remains constant for all sufficiently small $t \in T$ (see \cite[Th. III.3]{Briancon}). Finally, we know that $\textbf{X}$ is bi-Lipschitz equisingular if and only if the family of plane curves $p(\textbf{X}_t)$ is topologically trivial, where $p$ is a generic linear projection from $\mathbb{C}^n$ to $\mathbb{C}^2$ (see \cite[Ch. IV]{Briancon}, see also \cite[Cor. 3.6]{Giles}).

\subsection{Equisingular unfoldings and families of double point curves}\label{Unfoldingsection}

$ \ \ \ \ $ Now, let us review the concepts of equisingular unfoldings of a finitely determined map germ from $(\mathbb{C}^2,0)$ to $(\mathbb{C}^3,0)$. Let $f:(\mathbb{C}^2,0)\rightarrow (\mathbb{C}^3,0)$ be a finitely determined map germ and consider a $1$-parameter unfolding $F:(\mathbb{C}^{2} \times \mathbb{C},0)\rightarrow (\mathbb{C}^3 \times \mathbb{C},0)$ of $f$ denoted by $F(x,t)=(f_{t}(x),t)$. We assume that the origin is preserved, that is, $f_{t}(0)=0$ for all $t$. We can consider several notions of equisingularity for $F$. In this work, we will deal only with topological triviality, Whitney equisingularity and bi-Lipschitz equisingularity of $F$.

Gaffney \cite{gaffney} defined an important class of unfoldings called ``\textit{excellent unfoldings}''. An excellent unfolding has a natural stratification whose strata in the complement of the parameter space $T$ (where $T$ is a small neighborhood of $0$ in $\mathbb{C}$ of a representative of $F$) are the stable types in source and target. For an unfolding $F$ as above, the strata in the source are the following:

\begin{equation}\label{eq7}
\lbrace \mathbb{C}^2 \times \mathbb{C} \setminus D(F), \ D(F) \setminus T, \ T \rbrace.
\end{equation}

\noindent In the target, the strata are

\begin{equation}\label{eq8}
\lbrace \mathbb{C}^3 \times \mathbb{C} \setminus F(\mathbb{C}^2 \times \mathbb{C}), \  F(\mathbb{C}^2 \times \mathbb{C}) \setminus F(D(F)), \  F(D(F)) \setminus T, \ T \rbrace.
\end{equation}

\noindent where $D(F)$ denotes the double point space of the unfolding $F$ which can be viewed as a family of plane curves. Notice that $F$ preserves the stratification, that is, $F$ sends a stratum into a stratum.

\begin{remark}
    Let $f:(\mathbb{C}^2,0)\rightarrow (\mathbb{C}^3,0)$ be a finitely determined map germ and consider a $1$-parameter unfolding $F:(\mathbb{C}^{2} \times \mathbb{C},0)\rightarrow (\mathbb{C}^3 \times \mathbb{C},0)$ of $f$ denoted by $F(x,t)=(f_{t}(x),t)$. The unfolding $F$ induces a deformation in the double point curves $D(f), f(D(f)), D^2(f),$ and $D^2(f)/S_2.$ This deformation provides the families of curves that we will denote by $D(F), F(D(F)), D^2(F),$ and $D^2(F)/S_2,$ respectively. Furthermore, given an unfolding $F$ of $f,$ we will consider $D^2(F)$ in $(\mathbb{C}^2\times\mathbb{C}^2\times\mathbb{C},0)$ instead of $(\mathbb{C}^3\times\mathbb{C}^3,0),$ and the other germs $D(F)$ in $(\mathbb{C}^2\times\mathbb{C},0),\, F(D(F))$ in $(\mathbb{C}^3\times\mathbb{C},0),$ and $D^2(F)/S_2$ in $((\mathbb{C}^2\times\mathbb{C}^2\times\mathbb{C})/S_2,0).$
\end{remark}

\begin{definition} Let $F:(\mathbb{C}^2 \times \mathbb{C},0)\rightarrow (\mathbb{C}^3 \times \mathbb{C},0)$ be a $1$-parameter unfolding of a finitely determined map germ $f:(\mathbb{C}^2,0)\rightarrow(\mathbb{C}^3,0)$.\\

\noindent (a) We say that $F$ is \textit{topologically trivial} if there are germs of homeomorphisms:
\[
\Phi :(\mathbb{C}^2 \times \mathbb{C},0)\rightarrow (\mathbb{C}^2 \times \mathbb{C},0),  \ \Phi(x,t)=(\phi_{t}(x),t), \ \phi_{0}(x)=x, \ \phi_{t}(0)=0 
\]
\[
\Psi:(\mathbb{C}^3 \times \mathbb{C},0)\rightarrow (\mathbb{C}^3 \times \mathbb{C},0), \ \Psi(x,t)=(\psi_{t}(x),t), \ \psi_{0}(x)=x, \ \psi_{t}(0)=0
\]
\noindent such that $I=\Psi^{-1} \circ F \circ \Phi$, where $I(x,t)=(f(x),t)$ is the trivial unfolding of $f$.\\

\noindent (b) If in addition the homeomorphisms $\Phi$ and $\Psi$ in (a) are bi-Lipschitz (with the outer metric), then we say that $F$ is \textit{bi-Lipschitz equisingular}.\\

\noindent (c) We say that $F$ is \textit{Whitney equisingular} if the stratifications \rm{\eqref{eq7}}
\textit{in source and} \rm{\eqref{eq8}} \textit{in the target are Whitney regular along $T$.}  

\end{definition}

 We say that $F$ is \textit{$\mu$-constant} if $\mu(D(f_{t}),0)$ is constant for all $t \in T$. In this setting, according to \cite[Cor. $32$]{bobadilla} (see also \cite[Th. $6.2$]{ruas-bedregal-houston}), the topological triviality of $F$ is characterized by the constancy of $\mu(D(f_t),0).$ In other words, $F$ is topologically trivial if and only if it is $\mu$-constant.

 Regarding the Whitney equisingularity of $F,$ Marar, Nuño-Ballesteros, and Peñafort-Sanchis \cite[Th. 5.3]{marar-juanjo-guille} proved that it is characterized by the simultaneous constancy of $\mu(\gamma_t,0)$ and $\mu(D(f_t),0).$ Here, $\gamma_t$ denotes the transverse slice of $f_t,$ which is obtained by intersecting $f_t(\mathbb{C}^2)$ with a generic plane in $\mathbb{C}^3$ (see \cite{slice}).

\section{Equisingularity in families of double point curves}

$ \ \ \ \ $ In this section we will provide an answer to Question $1$ in the Introduction section. Consider that $f:(\mathbb{C}^2,0)\rightarrow (\mathbb{C}^3,0)$ is a finitely determined map germ and $F=(f_t,t)$ an unfolding of $f$. Initially, suppose that $F$ is topologically trivial. In this case, what we can say about the equisingularity of $D(F)$, $F(D(F))$, $D^2(F),$ and $D^2(F)/S_2$?

In \cite{ruas-bedregal-houston}, Callejas Bedregal, Houston, and Ruas showed that if $F$ is topologically trivial then all the four families of double point curves $D(F)$, $D^2(F)$, $D^2(F)/S_2$ and $F(D(F))$ are topologically trivial. We will present here an alternative presentation of this result. Our contribution in Lemma \ref{lemma1} is only item (b), the other items are already known in the literature.

\begin{lemma}\label{lemma1} Let $f:(\mathbb{C}^2,0)\rightarrow (\mathbb{C}^3,0)$ be a finitely determined map germ and consider an unfolding $F=(f_t,t)$ of $f$. If $F$ is topologically trivial, then\\ 

\noindent (a) $D(F)$ is bi-Lipschitz equisingular.\\

\noindent (b) $D^2(F)$ is Whitney equisingular.\\

\noindent (c) $D^2(F)/S_2$ and $F(D(F))$ are topologically trivial.

\end{lemma}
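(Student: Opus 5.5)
The plan is to reduce everything to constancy of numerical invariants along the parameter, using the criteria recalled in Section \ref{unfoldingsection}: topological triviality of a family of curves is equivalent to constancy of $\mu(\textbf{X}_t,\sigma(t))$; Whitney equisingularity is equivalent to constancy of both $\mu$ and the multiplicity; and for plane curves the three notions coincide. Since $F$ is topologically trivial, it is $\mu$-constant by \cite[Cor. 32]{bobadilla}, so $\mu(D(f_t),0)$ is constant. Because $D(F)$ is a family of plane curves, this already gives topological triviality and hence bi-Lipschitz equisingularity of $D(F)$ \cite[Th. 5.3.1]{buch}, \cite[Sec. VI]{pham2}. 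That settles (a).

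For (c), I will use the invariants $C(f_t)$ and $T(f_t)$, which are upper semicontinuous, or more precisely conserved in the sense that $C(f_0)$ is the sum of $C$ over the points of $f_t$ near $0$, and likewise for $T$. Under topological triviality the stable singularities (cross-caps and triple points) cannot split off away from the origin, since $\Phi,\Psi$ preserve the origin and these loci are topological invariants. Hence $C(f_t)$ and $T(f_t)$ are constant; alternatively I will quote \cite{ruas-bedregal-houston}, where $\mu$-constancy is shown to force this. The Marar--Mond formulas (\ref{eq1}) and (\ref{eq2}) applied to each $f_t$ then show in turn that $\mu(D^2(f_t))=\mu(D(f_t))-6T(f_t)$, that $\mu(D^2(f_t)/S_2)=(\mu(D^2(f_t))-C(f_t)+1)/2$, and that $\mu(f_t(D(f_t)))$ are all constant. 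Each of these families has an isolated singularity along the section $t\mapsto 0$, because $f_t$ is finitely determined for small $t$. So constancy of $\mu$ gives topological triviality of $D^2(F)/S_2$ and $F(D(F))$, and also of $D^2(F)$.

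For (b), it remains to show that the multiplicity $m(D^2(f_t),0)$ is constant. This is the main obstacle. My approach is to compare $D^2(f_t)$ with $D(f_t)$. The projection $\pi:D^2(f_t)\to D(f_t)$ is finite and generically $1$-to-$1$. The key observation is that $D^2(f_t)$ is a curve in $\mathbb{C}^4$ embedded via $(\textbf{x},\textbf{x}')$, and its tangent cone is controlled by the branches of $D(f_t)$ together with their partner branches $\textbf{x}'=\sigma(\textbf{x})$. Concretely, I will try to show that $m(D^2(f_t),0)=m(D(f_t),0)$. Each branch of $D^2(f_t)$ is parametrized by $(\gamma(s),\gamma'(s))$, where $\gamma,\gamma'$ parametrize branches of $D(f_t)$ (possibly the same one). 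The order of this parametrization is the minimum of the orders of $\gamma$ and $\gamma'$. Summing over branches and using the involution, this should relate to $m(D(f_t))$. Since $D(F)$ is a family of plane curves with constant $\mu$, it is equisingular in the Zariski sense, so the multiplicities of all branches of $D(f_t)$ are constant, and the pairing of branches under the involution is preserved by the topological trivialization. Hence the minimum orders are constant, and therefore $m(D^2(f_t),0)$ is constant.

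If this branchwise argument is delicate, because of the corank 2 case or non-reduced structure, the fallback is upper semicontinuity of multiplicity combined with a lower bound coming from the Milnor-number formula $\mu=2\delta-r+1$. Here $r$ is constant, since the branch structure is topologically determined, and $\delta$ is constant, since $\mu$ and $r$ are. Multiplicity constancy in the corank 1 case would then be checked via the Marar--Mond description of $D^2$ as the curve in $\mathbb{C}^3$ given by the divided differences. With constant $\mu$ and constant multiplicity, \cite[Th. III.3]{Briancon} yields Whitney equisingularity of $D^2(F)$.
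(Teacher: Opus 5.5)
Your proof is correct and follows essentially the paper's route: (c) comes from the Marar--Mond formulas once $C(f_t)$ and $T(f_t)$ are constant, and (b) comes from the fact that a branch of $D^2(f_t)$ lying over partner branches of $D(f_t)$ of multiplicities $m_1,m_2$ has multiplicity $\min(m_1,m_2)$, combined with constancy of the branch multiplicities and of the pairing. The one difference is in (a), where you go through $\mu$-constancy while the paper simply notes that the source homeomorphism carries $D(f)\times T$ onto $D(F)$. You should, however, drop two things, since your branchwise argument needs neither: the stated aim $m(D^2(f_t),0)=m(D(f_t),0)$, which fails whenever partner branches have $m_1\neq m_2$ (they contribute $2\min(m_1,m_2)$ versus $m_1+m_2$), and the $\delta$/$r$ fallback, which cannot control multiplicity (Henry's family in Example~\ref{henry} has constant $\mu$, $\delta$ and $r$ but jumping multiplicity).
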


\begin{proof} (a) Suppose $F$ is topologically trivial. Since the homeomorphisms in the source of $F$ must send the double point curve $D(f_t)$ in $D(f)$, therefore $D(F)$ is a topologically trivial deformation of $D(f)$. Since $D(F)$ is a family of plane curves is also bi-Lipschitz equisingular (see \cite[Th. 1.1]{Pichon}).\\

\noindent (c) For any $t$, it follows by \cite{marar-mond} and \cite{marar-juanjo-guille} that  
\begin{equation}\label{eq6}
\mu(D(f_t))=\mu(D^2(f_t))+6T(f_t)  \ \ \ and \ \ \ \mu(D(f_t))=2\mu(f_t(D(f_t)))+C(f_t)-2T(f_t)-1.
\end{equation}

Since $C$ and $T$ are topological invariants (see \cite{edson3}) and $F$ is topologically trivial, by the upper semicontinuity of the invariants in (\ref{eq6}) we obtain that $\mu(D^2(f_t))$ and $\mu(f_t(D(f_t)))$ are constant. Therefore, $D^2(F)$ and $F(D(F))$ are topologically trivial. It follows by
\begin{equation}
\mu(D^2(f_t))=2\mu(D^2(f_t)/S_2)+C(f_t)-1
\end{equation}

\noindent that $\mu(D^2(f_t)/S_2)$ is also constant. Therefore, $D^2(F)/S_2$ is also topologically trivial.\\

\noindent (b) It follows from the proof of (c) that $D^2(F)$ is topologically trivial. Therefore, it is sufficient to show that the multiplicity of $D^2(f_t)$ is constant. Let $D(f)^1$ be an irreducible component of $D(f)$ with multiplicity $m_1$. We can take a Puiseux parametrization $\varphi^1$ for $D(f)^1$ defined by 
\begin{equation*}
\varphi^1(u):= \left( u^{m_1}, \displaystyle { \sum_{i\geq m_1}^{}}a_{1,i}u^i  \right).
\end{equation*}
Suppose that $D^2(f)^1$ is an irreducible component of $D^2(f)$ such that $p(D^2(f)^1)=D(f)^1$, where the map $p:\mathbb{C}^2 \times \mathbb{C}^2\rightarrow \mathbb{C}^2$ is the projection onto the second factor. Since $D^2(f)$ is $S_2$ invariant, then
there exists an irreducible component $D(f)^2$ of multiplicity $m_2$ of $D(f)$ (not necessarily distinct from $D(f)^1)$ parametrized by Puiseux parametrization $\varphi^2$
\begin{equation*}
\varphi^2(u):= \left( u^{m_2}, \displaystyle { \sum_{i\geq m_2}^{}}b_{1,i}u^i  \right)
\end{equation*}
\noindent and it follows by \cite[Lemma 3.2]{greuel-lossen} that there exists an invertible element $\alpha$ in $\mathcal{O}_1=\mathbb{C}\lbrace u \rbrace$ and a Puiseux parametrization $\tilde{\varphi}^1$ for $D^2(f)^1$ such that 
\begin{equation*}
\tilde{\varphi}^1(u):= \left( u^{m_1}, \displaystyle { \sum_{i\geq m_1}^{}}a_{1,i}u^i, (\alpha u)^{m_2}, \displaystyle { \sum_{i\geq m_2}^{}}b_{1,i}(\alpha u)^i   \right).
\end{equation*}
Therefore, the multiplicity of $D^2(f)^1$ is the minimum of $m_1$ and $m_2$. Since $D(F)$ is Whitney equisingular, the multiplicity of $D(f_t)^1$ is constant. Since the multiplicity is additive, we obtain that the multiplicity of $D^2(f_t)$ is constant.\end{proof}\\

Ruas and the first author showed in \cite{ruas-otoniel} that if $F$ is topologically trivial then $F(D(F))$ may not be Whitney equisingular. In the next proposition we will show that if $F$ is topologically trivial then $D^2(F)$ may not be bi-Lipschitz equisingular and $D^2(F)/S_2$ may not be Whitney equisingular or bi-Lipschitz equisingular. Therefore, the conclusion about the equisingularity of the family of curves $D(F)$, $D^2(F)$, $D^2(F)/S_2$ and $F(D(F))$ in Lemma \ref{lemma1} under the hypothesis of topological triviality of $F$ cannot be improved.  

\begin{proposition}\label{lemma2} Consider the family of map germs $F=(f_t,t)$ where $f_t$ is defined by
\begin{equation}\label{eq4}
f_t(x,y)=(x,y^4+txy,y^7+x^2y).
\end{equation}

In this family, $f_0=f$ is finitely determined and $F=(f_t,t)$ is Whitney equisingular (in particular, it is topologically trivial). Furthermore, the family of double point curves $D^2(F)$ is not bi-Lipschitz equisingular, and $D^2(F)/S_2$ is neither Whitney equisingular nor bi-Lipschitz equisingular.

\end{proposition}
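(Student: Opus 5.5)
The plan is to reduce everything to explicit computations on the equations of the double point spaces. Since $f_t$ has corank $1$, $D^2(f_t)$ is a complete intersection in $\mathbb{C}^3$ with coordinates $(x,y,y')$, cut out by the divided differences
\[
g_1=\frac{(y^4+txy)-(y'^4+txy')}{y-y'}=h_3(y,y')+tx,\qquad g_2=\frac{(y^7+x^2y)-(y'^7+x^2y')}{y-y'}=h_6(y,y')+x^2 .
\]
Here $h_k=\sum_{i+j=k}y^iy'^j$. The quotient $D^2(f_t)/S_2$ is the curve in $\mathbb{C}^3$ with coordinates $(x,s,p)$, where $s=y+y'$ and $p=yy'$, defined by the same two (symmetric) functions rewritten in $s,p$.

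\emph{Step 1: finite determinacy and Whitney equisingularity of $F$.} I will note that $f_t$ is weighted homogeneous for all $t$, with weights $w(x)=3$, $w(y)=1$ and degrees $(3,4,7)$. The deformation term $txy$ has the same weighted degree $4$ as $y^4$. From this I would compute $D(f_t)$ and check that it has an isolated singularity, so $f_0=f$ is finitely determined by Theorem \ref{criterio}. Weighted homogeneity of degree $0$ in $t$ then gives that $\mu(D(f_t),0)$ is constant, because it is computed from the weights and degrees.

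For Whitney equisingularity I will use \cite[Th. 5.3]{marar-juanjo-guille}: it remains to check that $\mu(\gamma_t,0)$ is constant. I will compute the transverse slice from the parametrization of $f_t$ restricted to a generic plane, and read off its $\delta$-invariant and number of branches. I expect this to be the most tedious part of the argument, although it is routine.

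\emph{Step 2: $D^2(F)$ is not bi-Lipschitz equisingular.} By Lemma \ref{lemma1}, $D^2(F)$ is Whitney equisingular, so I will use the criterion via generic plane projections.

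For $t\neq0$, the equation $g_1=0$ is solvable as $x=-h_3/t$. Hence $D^2(f_t)$ is a graph over the plane curve $h_6+h_3^2/t^2=0$, and the projection to $(y,y')$ is bi-Lipschitz. That curve is homogeneous of degree $6$, so it consists of $6$ distinct transverse lines (I must check that they are distinct). Its generic projection is therefore an ordinary $6$-fold point.

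For $t=0$, $h_3=(y+y')(y^2+y'^2)$ is three lines. On each line $y'=\omega y$ we get $x^2=-c_\omega y^6$ with $c_\omega\neq0$, i.e.\ $x=\pm\sqrt{-c_\omega}\,y^3$. So $D^2(f)$ has six smooth branches, which are pairwise tangent with contact of order $3$. A generic projection preserves these tangencies, so the plane curves obtained for $t=0$ and $t\neq0$ are not topologically equivalent: their intersection multiplicities between branches differ. Hence $D^2(F)$ is not bi-Lipschitz equisingular.

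\emph{Step 3: $D^2(F)/S_2$ is neither Whitney nor bi-Lipschitz equisingular.} Lemma \ref{lemma1}(c) gives topological triviality, so it suffices to show that the multiplicity jumps; bi-Lipschitz equisingularity also preserves multiplicity, which rules that out too. Write $h_3=s(s^2-2p)$ and $h_6=H(s,p)$, where $H$ contains the monomial $-p^3$ (check at $s=0$: $y'=-y$ gives $h_6=y^6=-p^3$).

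At $t=0$ the curve splits into two pieces. On $\{s=0\}$ we get $x^2=p^3$, a cusp of multiplicity $2$. On $\{s^2=2p\}$ we get, after eliminating $p$, $x^2+cs^6=0$, of multiplicity $2$. So the total multiplicity is $4$.

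For $t\neq0$, eliminate $x=-s(s^2-2p)/t$. This leaves the plane curve $H(s,p)+s^2(s^2-2p)^2/t^2=0$, whose lowest-order part contains $-p^3$, so its multiplicity is $3$. The multiplicity drops from $4$ to $3$, so by the characterization in \cite[Th. III.3]{Briancon} the family is not Whitney equisingular, and it is not bi-Lipschitz equisingular either.

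The key verifications to be done carefully are that the six lines for $t\neq0$ are distinct, and the exact lowest-order terms of $H$.
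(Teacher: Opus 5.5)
Your proposal is correct and follows the paper's strategy. You use the explicit divided-difference equations for $D^2(F)$ and $D^2(F)/S_2$, contrast $D^2(f_0)$ (six smooth branches with only three tangent directions) against $D^2(f_t)$ (six distinct lines), and show the quotient's multiplicity drops from $4$ to $3$; using $p=yy'$ instead of the paper's $(y-y')^2$ changes nothing. You differ only in verifying Whitney equisingularity of $F$ through constancy of $\mu(D(f_t),0)$ and $\mu(\gamma_t,0)$ rather than citing the corollary on unfoldings by terms of equal weighted degree; this holds, since $\gamma_t$ has type $(y^4+\cdots,\,y^7+\cdots)$ and so $\mu(\gamma_t,0)=18$. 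You also detect the failure of bi-Lipschitz equisingularity of $D^2(F)$ through intersection multiplicities $3$ versus $1$ between branches of generic projections, instead of the tangent-line count $3$ versus $6$ together with Sampaio's bi-Lipschitz invariance of tangent cones.
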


\begin{proof} We begin by showing that $F$ is Whitney equisingular. We have that

\begin{center}
$D(f)=V(-x^6+x^4y^6-x^2y^{12}+y^{18})$.
\end{center}


One can verify that $D(f)$ is a reduced curve. Hence, $f$ is finitely determined by \cite[Cor. 3.5]{marar-mond}. Note that $f$ is quasihomogeneous of type $(3,4,7;3,1)$. Since the unfolding adds only a term of weighted degree $4$ in the second coordinate
function (whose weighted degree is $4$), it follows from \cite[Cor. 4.4]{Slice} that $F$ is Whitney equisingular. Using divided differences, we have that

\begin{center}
$D^2(F)=V(y^3+y^2z+yz^2+z^3+tx,\  x^2+y^6+y^5z+y^4z^2+y^3z^3+y^2z^4+yz^5+z^6)$.
\end{center}

The family of tangent cones of $D^2(f_t)$, denoted by $C(D^2(F))$, is given by

\begin{center}
$C(D^2(F))=V(tx,x^2,x(y^3+y^2z+yz^2+z^3),y^6+y^5z+y^4z^2+y^3z^3+y^2z^4+yz^5+z^6)$.
\end{center}

For $t\neq 0,$ $D^2(f_t)$ has $6$ distinct tangents; however, for $t=0$ this is no longer true. In this case, $D^2(f)$ has only $3$ distinct tangents. Therefore, $D^2(F)$ is not bi-Lipschitz equisingular (see \cite[Th. 2.2]{Sampaio}, see also \cite[Cor. 4.7]{Giles}). Finally, to show that $D^2(F)/S_2$ is not Whitney equisingular, we note that

\begin{center}
$D^2(F)/S_2=V(YZ+Y^3+2tX, 64X^2-Z^3-21Y^2Z^2-35Y^4Z-7Y^6)$.
\end{center}

Observe that $D^2(f)/S_2$ is a quasihomogeneous ICIS of multiplicity $4$ in $(\mathbb{C}^3,0)$. However, for $t\neq 0$ the multiplicity of $D^2(f_t)/S_2$ is $3$. Therefore, $D^2(F)/S_2$ is not Whitney equisingular, and a fortiori, it is not bi-Lipschitz equisingular.\end{proof}\\



Table \ref{tabela1} presents the setting on the equisingularity of $D(F)$, $F(D(F))$, $D^2(F),$ and $D^2(F)/S_2$ under the hypothesis that $F$ is topologically trivial. Observe that the symbol (\textcolor{blue}{\ding{51}}) in Table \ref{tabela1} means that the result is true for the family of curves and the type of equisingularity specified, while the symbol (\textcolor{red}{\ding{55}}) means that the result is not true, in general.

\begin{table}[!h]
\centering
{\def\arraystretch{2.6}\tabcolsep=12pt 

\begin{tabular}{ c | c | c | c | c }

\hline
\rowcolor{lightgray}
\hline
\multicolumn{5}{c}{\textbf{Equisingular condition: $F$ is topologically trivial}}\\
\hline
\textbf{Equising. Type}  &  $D(F)$ & $D^2(F)$ &  $F(D(F))$  & $D^2(F)/S_2$ \\

\hline
Topological Trivial     &  \makecell{\textcolor{blue}{\ding{51}} \\ (Lemma \ref{lemma1})} &  \makecell{\textcolor{blue}{\ding{51}} \\ (Lemma \ref{lemma1})}   &  \makecell{\textcolor{blue}{\ding{51}} \\ (Lemma \ref{lemma1})} & \makecell{\textcolor{blue}{\ding{51}} \\ (Lemma \ref{lemma1})} \\

\hline
Whitney equisingular    & \makecell{\textcolor{blue}{\ding{51}} \\ (Lemma \ref{lemma1})} &  \makecell{\textcolor{blue}{\ding{51}} \\ (Lemma \ref{lemma1})}  &  \makecell{\textcolor{red}{\ding{55}} \\ (Lemma \ref{lemma4})}  & \makecell{\textcolor{red}{\ding{55}}  \\ (Prop. \ref{lemma2})} \\

\hline
bi-Lipschitz equisingular   & \makecell{\textcolor{blue}{\ding{51}} \\ (Lemma \ref{lemma1})} &  \makecell{\textcolor{red}{\ding{55}} \\ (Prop. \ref{lemma2}) \\ \textcolor{blue}{\ding{51}} if $f$ is \\ homogeneous \\ (Th. \ref{lemma3})} &   \makecell{\textcolor{red}{\ding{55}} \\ (Lemma \ref{lemma4})} & \makecell{\textcolor{red}{\ding{55}} \\ (Prop. \ref{lemma2})}  \\

\hline

\end{tabular}
}
\caption{Equisingularity of families of double point curves under topological triviality condition of $F$.}\label{tabela1}
\end{table}

\begin{remark}
    Observe that Theorem \ref{lemma3} and Lemma \ref{lemma4} in Table \rm{\ref{tabela1}} \textit{have not yet been presented in the work. However, the authors believe that this would be the best time to present this table with a general overview of the assumption that $F$ is (only) topologically trivial.}
\end{remark}

Once we have exhausted the study with the hypothesis that $F$ is topologically trivial, the next step is to assume that $F$ is Whitney equisingular and see if anything changes.

\begin{lemma}\label{lemma8} Let $f:(\mathbb{C}^2,0)\rightarrow (\mathbb{C}^3,0)$ be a finitely determined map germ and $F=(f_t,t)$ an unfolding of $f$. If $F$ is Whitney equisingular, then $D(F)$, $D^2(F),$ and $F(D(F))$ are Whitney equisingular.
\end{lemma}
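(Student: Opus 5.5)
Since a Whitney equisingular unfolding is topologically trivial (Thom's second isotopy lemma), Lemma \ref{lemma1} already gives that $D(F)$ and $D^2(F)$ are Whitney equisingular. The content of the statement is therefore the family $F(D(F))$. By Lemma \ref{lemma1}(c), $F(D(F))$ is topologically trivial. By the criterion of \cite[Th. III.3]{Briancon} recalled in Section \ref{unfoldingsection}, it then suffices to show that the multiplicity $m(f_t(D(f_t)),0)$ is constant in $t$.

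\textbf{Route via the stratification.} First I would use the definition directly. Whitney equisingularity of $F$ means that the target stratification \eqref{eq8} is Whitney regular along $T$. In particular, the pair of strata $(F(D(F))\setminus T,\ T)$ satisfies Whitney's condition (b) at $0$. This is exactly the condition in Definition \ref{deftoptrivial}(c) for the family $F(D(F))$ with section $\sigma(t)=(0,t)$, and it gives the result immediately. The only point to check is that the underlying sets agree. The stratum $F(D(F))\setminus T$ of $F$ is the reduced curve family, and Whitney regularity depends only on the underlying reduced sets, so the Fitting ideal structure plays no role. One must also check that $F(D(F))\setminus T$ is smooth near $T$ and that each $f_t(D(f_t))$ is singular only at the origin. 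This holds because, off the origin, $f_t$ is stable with no triple points or cross-caps near $0$, so $F(D(F))$ consists of transverse double points away from $T$.

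\textbf{Alternative multiplicity route.} As a cross-check, I would also argue via multiplicities. By \cite[Th. 5.3]{marar-juanjo-guille}, $\mu(\gamma_t)$ and $\mu(D(f_t))$ are constant. I would express $m(f_t(D(f_t)),0)$ as the number of intersection points of $f_t(D(f_t))$ with a generic plane near the origin. That plane section is the transverse slice $\gamma_t$, whose singular points near $0$ are the double points coming from $f_t(D(f_t))$ plus possibly the centre. Hence the multiplicity is controlled by $\delta(\gamma_t)$, which is constant by $\mu$-constancy of the plane curve family $\gamma_t$ together with upper semicontinuity.

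\textbf{Main obstacle.} The hard step in this second route is making the genericity of the plane uniform in $t$. I expect it to be easier to rely on the first, stratification-based argument, handling only the reduced-versus-Fitting structure carefully.
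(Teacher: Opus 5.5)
Your proposal is correct. Your main argument for $F(D(F))$ differs from the paper's and is more direct.

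The paper handles $D(F)$ and $D^2(F)$ exactly as you do, through Lemma~\ref{lemma1}. For $F(D(F))$ it argues numerically. Topological triviality comes from Lemma~\ref{lemma1}(c). Constancy of the multiplicity of $f_t(D(f_t))$ is taken from the characterization of Whitney equisingularity of $F$ in \cite[Th. 5.3]{marar-juanjo-guille}, namely constancy of $\mu(D(f_t),0)$ and $\mu(\gamma_t,0)$. The conclusion then follows from \cite[Th. III.3]{Briancon}. Your ``alternative multiplicity route'' is essentially this argument. The difference is that the paper does not re-derive the multiplicity constancy from a plane section, so the uniformity problem you flag is absorbed into the citation.

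Your first route instead notes that $\{F(D(F))\setminus T,\ T\}$ is part of the target stratification \eqref{eq8}. Whitney regularity of \eqref{eq8} along $T$ therefore is, verbatim, Definition~\ref{deftoptrivial}(c) for the family $F(D(F))$. This uses no invariants at all. The same observation applied to the source stratification \eqref{eq7} gives $D(F)$ directly, so only $D^2(F)$ really needs Lemma~\ref{lemma1}(b). What the paper's route buys in exchange is the explicit fact that $m(f_t(D(f_t)),0)$ is constant. It also remains valid if one takes the numerical criterion of \cite{marar-juanjo-guille} as the working definition of Whitney equisingularity of $F$.

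One step should be justified rather than asserted: that $f_t$ has no cross-caps or triple points off the origin for $t\neq 0$. Remark~\ref{remarktriplepoints} only gives this for $f_0$. It follows from topological triviality of $F$, which you get from Thom's second isotopy lemma. Then $C(f_t)$ and $T(f_t)$ are constant, and since these counts are conserved under deformation, no such points can split off the origin. This is also implicit in Gaffney's excellent-unfolding setting, in which \eqref{eq8} is the stratification.
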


\begin{proof}
    Since $F$ is Whitney equisingular, by Lemma \ref{lemma1} we know that $D(F)$ and $D^2(F)$ are Whitney equisingular. Now, by \cite[Th. 5.3]{marar-juanjo-guille} and \cite[Th. III.3]{Briancon}) we conclude that $F(D(F))$ is Whitney equisingular.
\end{proof}\\

Although $F(D(F))$ is Whitney equisingular (under the assumption that $F$ itself is Whitney equisingular), Proposition \ref{lemma2} indicates that the other two curves do not satisfy a stronger equisingularity condition beyond topological triviality.


Note that the map germ $f_t$ in (\ref{eq4}) is quasihomogeneous (with distinct weights) for all $t$ and $F=(f_t,t)$ is Whitney equisingular; nevertheless, this is not a sufficient condition for $D^2(F)$ to be bi-Lipschitz equisingular. In the next result, we  will provide sufficient conditions for $D^2(F)$ to be bi-Lipschitz equisingular.

\begin{theorem}\label{lemma3} Let $f:(\mathbb{C}^2,0)\rightarrow (\mathbb{C}^3,0)$ be a homogeneous finitely determined map germ and consider an unfolding $F=(f_t,t)$ of $f$. If $F$ is topologically trivial, then $D^2(F)$ is bi-Lipschitz equisingular.
\end{theorem}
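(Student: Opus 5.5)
The approach is to use the criterion recalled in Section \ref{unfoldingsection}: a topologically trivial family of space curves is bi-Lipschitz equisingular if and only if its image under a generic linear projection to $\mathbb{C}^2$ is a topologically trivial family of plane curves. By Lemma \ref{lemma1}(b), $D^2(F)$ is already Whitney equisingular, hence topologically trivial with constant multiplicity. It therefore suffices to control the tangent cones and the higher-order data of the branches of $D^2(f_t)$ uniformly in $t$. The point where homogeneity enters is the following. If $f$ is homogeneous of degree $d$, then $D(f)$ is a homogeneous plane curve. It is reduced with finite Milnor number by Theorem \ref{criterio}, so it is a union of $r$ distinct lines through the origin. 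Its Milnor number is $(r-1)^2$ and its multiplicity is $r$.

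First, I will describe $D^2(f)$ when $f$ is homogeneous. The divided-difference ideal is then homogeneous in $(\textbf{x},\textbf{x}')$, so $D^2(f)$ is a cone, i.e. a union of lines in $\mathbb{C}^4$. I will show these lines are pairwise distinct, and that their number equals the multiplicity of $D^2(f)$. Using the Puiseux argument in the proof of Lemma \ref{lemma1}(b), each branch of $D^2(f)$ lies over a pair of lines of $D(f)$ and is parametrized linearly, as $u\mapsto (u\,v_1,\alpha u\, v_2)$ with $\alpha$ constant. So $D^2(f)$ is an arrangement of lines. Hence $m(D^2(f))$ equals the number of distinct tangent lines.

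Second, I will pass to $t\neq 0$. By Lemma \ref{lemma1}(a),(b), $D(F)$ and $D^2(F)$ are Whitney equisingular, so $m(D^2(f_t))=m(D^2(f))$ and $\mu(D^2(f_t))=\mu(D^2(f))$. Whitney equisingularity of $D(F)$ together with $D(f)$ being $r$ smooth transverse lines forces each $D(f_t)$ to be $r$ smooth branches with pairwise distinct tangents; this is the equisingularity type of an ordinary $r$-fold point. I will transfer this to $D^2(f_t)$ via the same Puiseux description: each branch of $D^2(f_t)$ is smooth (its parametrization has a linear term because the $D(f_t)$ branches are smooth), so $D^2(f_t)$ has $m(D^2(f))$ smooth branches. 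A curve with $m$ smooth branches and $m$ distinct tangents has the minimal possible $\delta$ for its multiplicity. Constancy of $\delta$ (from constancy of $\mu$ and of the number of branches) then forces the tangents of $D^2(f_t)$ to stay pairwise distinct.

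Finally, I will project. For a generic linear projection $p:\mathbb{C}^4\to\mathbb{C}^2$, uniformly in small $t$, the images $p(D^2(f_t))$ are plane curves consisting of $m$ smooth branches with $m$ distinct tangents, i.e. ordinary $m$-fold points. Such a family is topologically trivial, so $D^2(F)$ is bi-Lipschitz equisingular by \cite[Cor. 3.6]{Giles} (equivalently \cite[Th. 2.2]{Sampaio}). The main obstacle I expect is the second step: rigorously showing that constant $\delta$ (or $\mu$) forces distinct tangents in the space-curve setting. The cleanest route is probably the formula $\delta = \sum_{\text{pairs}} (\text{intersection contributions}) + \sum \delta_i$ together with the inequality $\delta(C)\ge \delta(\text{ordinary }m\text{-fold point in }\mathbb{C}^4)$, with equality iff the tangents are in general enough position. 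This may require checking that the lines of $D^2(f)$ are in generic position in the sense needed, which homogeneity of $f$ should supply.
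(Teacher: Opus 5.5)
Your overall strategy (project to a plane, check the projected family is topologically trivial, apply \cite[Cor. 3.6]{Giles}) is the right one. Your first step is fine. So is the first half of the second: $D(f_t)$ is an ordinary $r$-fold point for every small $t$, and each branch of $D^2(f_t)$ maps birationally onto a smooth branch of $D(f_t)$, hence is smooth.

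The gap is in how you then obtain distinct tangents for $D^2(f_t)$. You claim that $m$ smooth branches with $m$ distinct tangents have the minimal $\delta$ for multiplicity $m$. This is false for space curves, because $\delta$ of a union of lines depends on their linear position. For example, the three coordinate axes of $\mathbb{C}^3$ have $\delta=2$, while three coplanar lines have $\delta=3$. Homogeneity does not supply general position either. If $f=(x,y^k,\ldots)$ (e.g.\ $f_0$ in Lemma \ref{lemma4}), the divided difference $(y^k-y'^k)/(y-y')=\prod_{\zeta^k=1,\,\zeta\neq 1}(y-\zeta y')$ forces every line of $D^2(f)\subset\mathbb{C}^3_{(x,y,y')}$ to lie on one of $k-1$ planes through the $x$-axis. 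So constancy of $\delta$ gives you nothing along the route you describe.

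There is also an unproved assertion in your last step. A projection generic for $D^2(f)$ is generic for all $D^2(f_t)$ uniformly in $t$ only if you control how the planes spanned by pairs of tangent lines of $D^2(f_t)$ move with $t$. The paper invokes \cite[Prop. 4.10]{c5} precisely for this transfer.

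Both problems disappear if you use the projection $p_1:\mathbb{C}^2\times\mathbb{C}^2\to\mathbb{C}^2$ onto the first factor instead of a generic projection; this is the paper's route. By what you already proved, the tangent vector $v_i(t)$ of the $i$-th branch of $D^2(f_t)$ has as first component the tangent vector of the $i$-th branch of $D(f_t)$. These first components are pairwise linearly independent for every $t$; for $t=0$ this is the paper's nonvanishing determinant.

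Two things follow directly:
\begin{itemize}
\item the tangents of $D^2(f_t)$ are automatically distinct;
\item $\ker p_1$ meets each plane $\mathrm{span}(v_i(t),v_j(t))$ only at $0$.
\end{itemize}
For smooth branches with distinct tangents, $C_5$ is the union of these planes (Remark \ref{planos}, \cite[Th. 3.2]{c5}). Hence $p_1$ is $C_5$-generic for all $D^2(f_t)$ simultaneously. Since $p_1(D^2(f_t))=D(f_t)$ is topologically trivial, \cite[Cor. 3.6]{Giles} concludes the proof with no $\delta$ computation.

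If you want to keep a generic projection, a correct use of $\delta$ does work. Constancy of $\delta$ gives a simultaneous normalization, so the $v_i(t)$ vary continuously and the planes they span converge to those of $D^2(f)$. A projection generic at $t=0$ then stays generic for small $t$.
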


\begin{proof} Since $f$ is homogeneous and finitely determined, $D^2(f)$  (respec. $D(f)$) is a finite union of $r$ distinct lines in $\mathbb{C}^2\times\mathbb{C}^2$ (respec. $\mathbb{C}^2$) passing through the origin. Let $p:D^2(f)\rightarrow D(f) \subset \mathbb{C}^2$ be the projection onto the first factor. Denote by $D(f)^i=p(D^2(f)^i).$ For each $i,$ the component $D(f)^i$ of $D(f)$ can be parametrized by $\varphi_i(u)=(a_iu,b_iu),$ for some $a_i,b_i\in\mathbb{C}$ not both zero. 

On the other hand, if a pair $(D(f)^i,D(f)^j)$ is such that $f(D(f)^i)=f(D(f)^j)$, then the component $D^2(f)^i$ can be parametrized by \begin{equation}\label{repara}
    \phi_i(u)=(a_iu,b_iu,c_iu,d_iu),
\end{equation} where $\psi_i(u)=(c_iu,d_iu)$ is a convenient reparametrization of the line $D(f)^j.$ With an analogous argument, if the restriction of $f$ to $D(f)^i$ is 2-to-1, then $D^2(f)^i$ has a parametrization in the from of \eqref{repara}. However, $\psi_i$ is a reparametrization of $D(f)^i$ itself. Now, let $v_i=(a_i,b_i,c_i,d_i)$ be the director vector of $D^2(f)^i$ and, for each $i\ne j,$ consider the plane $H_{i,j}$ generated by $v_i$ and $v_j$. By Remark \ref{planos}, we see that $D^2(f)$ is given by the finite union of the planes $H_{i,j}.$

Note that the linear projection $p:\mathbb{C}^2\times\mathbb{C}^2\rightarrow\mathbb{C}^2$ is generic to $D^2(f).$ In fact, observe that $(x,y,x',y')\in ker(p)$ if and only if $x=y=0.$ On the other hand, since $D^2(f)^i$ and $D^2(f)^j$ are distinct lines, then \begin{equation}\label{det}
    det\left[\begin{array}{cc}
    a_i & b_i \\
    a_j & b_j
\end{array}\right]\ne0.
\end{equation} Otherwise, $\varphi_i$ is a reparametrization of $\varphi_j,$ which imply that $D(f)^i=D(f)^j.$ Now, by \eqref{det} we have that the intersection of $ker(p)$ with each $H_i,j$ is only the origin. Thus, the intersection of $ker(p)$ with $C_5(D^2(f),0)$ is only the origin. Therefore, by \cite[Prop. 4.10]{c5} we conclude that $p$ is $C_5$-generic to $D^2(f_t)$ for all $t$ sufficiently small. Since $p(D^2(f_t))=D(f_t),$ and $D(F)$ is topologically trivial, by \cite[Cor. 3.6]{Giles} we conclude that $D^2(F)$ is bi-Lipschitz equisingular.
\end{proof}\\

 A natural question that arises at this point is whether the same result holds for $F(D(F))$ in the homogeneous case. Unfortunately, the answer is negative. In \cite{ruas-otoniel}, Ruas and the first author showed that the family $F=(f_t,t),$ where $$f_t(x,y)=(x,y^4,x^5y-5x^3y^3+4xy^5+y^6+ty^7)$$ is topologically trivial. However, $F(D(F))$ is not bi-Lipschitz equisingular. More precisely, they showed the following lemma:
 

\begin{lemma}\label{lemma4}(\rm{\cite[\textit{Ex.} 5.4]{ruas-otoniel}}\textit{) Consider the family of map germs from $(\mathbb{C}^2,0)$ to $(\mathbb{C}^3,0)$ defined by}

\begin{center}
$f_t(x,y)=(x,y^4,x^5y-5x^3y^3+4xy^5+y^6+ty^7)$.
\end{center}

\noindent\textit{The map germ $f=f_0$ is finitely determined, but the unfolding $F=(f_t,t)$ is not Whitney equisingular.}
\end{lemma}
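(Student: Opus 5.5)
We argue by contradiction via Lemma \ref{lemma8}. If $F$ were Whitney equisingular, then $F(D(F))$ would be Whitney equisingular, so by \cite[Th. III.3]{Briancon} the multiplicity $m(f_t(D(f_t)),0)$ would be constant in $t$. So it suffices to (i) check that $f$ is finitely determined and (ii) exhibit a jump in some invariant that Whitney equisingularity forces to be constant. As a back-up for (ii) we use \cite[Th. 5.3]{marar-juanjo-guille}: Whitney equisingularity is equivalent to simultaneous constancy of $\mu(D(f_t),0)$ and $\mu(\gamma_t,0)$.

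\textbf{Step 1: $D(f_t)$ and finite determinacy.} Since $f_t(x,y)=(x,y^4,h_t(x,y))$ with $h_t=x^5y-5x^3y^3+4xy^5+y^6+ty^7$ has corank $1$, we compute $D^2(f_t)$ by divided differences in the $y$-variable. Take $(x,y,y')$ with the equations
\[
\frac{y^4-y'^4}{y-y'}=0,\qquad \frac{h_t(x,y)-h_t(x,y')}{y-y'}=0,
\]
and obtain $D(f_t)$ by eliminating $y'$, i.e. as the resultant (Fitting ideal) in $y'$. Writing $y'=\omega y$ with $\omega^4=1$, $\omega\neq1$, the curve $D(f_t)$ splits as a product over $\omega\in\{-1,\pm i\}$ of $\frac{h_t(x,y)-h_t(x,\omega y)}{(1-\omega)y}$. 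This makes the branches and tangent directions explicit. For $t=0$ we check that the resulting plane curve is reduced, hence has finite Milnor number, so $f$ is finitely determined by Theorem \ref{criterio}. We also note that $\mu(D(f_t))$ is constant: the term $ty^7$ has higher weighted order than the leading homogeneous part, and the lowest-order part of each factor does not change with $t$. Thus $F$ is topologically trivial, and any failure of Whitney equisingularity must come from the second invariant.

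\textbf{Step 2: the jump.} Using the $\omega$-factorisation, we parametrise each branch of $D(f_t)$ and push it forward by $f_t$ to get parametrisations of the branches of $f_t(D(f_t))$. The first coordinate is $x$ and the second is $y^4$. The point is that for $t=0$ the homogeneous degree-$6$ polynomial $x^5y-5x^3y^3+4xy^5=xy(x^2-y^2)(x^2-4y^2)$ is built so that along certain branches (lines $y=cx$ of $D(f)$) the lowest-order terms of the image cancel. We then compare the orders of the image parametrisations at $t=0$ and $t\neq0$. We expect the multiplicity of some branch of $f_t(D(f_t))$ in $\mathbb{C}^3$ to be strictly smaller for $t\ne0$ (or, equivalently, the transverse slice $\gamma_t$ to change multiplicity). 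This contradicts Lemma \ref{lemma8}. If the multiplicity count turns out to be stable, we instead compute $\mu(\gamma_t)$: intersect the image with a generic plane through $0$, take the resulting parametrised plane curve, and compute $\delta$ and then $\mu=2\delta-r+1$ to show that $\mu(\gamma_t)$ jumps.

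\textbf{Main obstacle.} The crux is the explicit computation in Step 2: identifying exactly which branch degenerates at $t=0$ and verifying the cancellation in the image parametrisation. A general-position choice of plane or projection could hide the jump, so this is where care (or a computer-algebra check of the Fitting ideal $Fitt_1(f_{t*}\mathcal{O}_2)$ and its tangent cone) is needed.
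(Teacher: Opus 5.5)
The paper proves nothing here beyond citing Ruas--Silva, so your proposal has to stand on its own, and as written it does not: there is a genuine gap at the decisive step.

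\textbf{What is sound.} The reduction is fine: via Lemma~\ref{lemma8} plus Brian\c{c}on's multiplicity criterion, or via \cite[Th. 5.3]{marar-juanjo-guille}, it suffices to exhibit a jump. Step~1 is also the right set-up. The $\omega=-1$ factor of $D(f_0)$ is $x(x^2-y^2)(x^2-4y^2)$ and the $\omega=\pm i$ factors are $x^5\mp5ix^3y^2+4xy^4+(1\pm i)y^5$. Once you actually check that these fifteen lines are distinct, $\mu(D(f_t))$ is constant because this reduced tangent cone is unchanged.

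\textbf{The gap.} The whole content of the lemma is the jump, and Step~2 only says you \emph{expect} one. Worse, the mechanism you point to cannot produce it. Along a line $y=cx$ the first coordinate of $f_t$ restricts to $x$, which has order one, so the image branch is smooth whatever cancels in the third coordinate.

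Your parenthetical is also wrong: the transverse slice does not change multiplicity, since $m(\gamma_t)=m(f_t(\mathbb{C}^2),0)=\dim_{\mathbb{C}}\mathcal{O}_2/(x,y^4)=4$ for all $t$. What can change is $\delta(\gamma_t)=m(f_t(D(f_t)),0)$. Since $f_t$ has corank one, $\gamma_t$ is irreducible and $\mu(\gamma_t)=2\,m(f_t(D(f_t)),0)$. So your ``back-up'' computes the same invariant and is not an independent fallback.

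\textbf{The missing computation.} The branch that degenerates is the fold line $x=0$. It is the only line of $D(f_0)$ on which the first coordinate vanishes identically.
\begin{itemize}
\item At $t=0$: $f_0(0,y)=(0,y^4,y^6)$ is $2$-to-$1$ onto the cusp $(0,s^2,s^3)$. Counting the cusp, the pairs $x=\pm y$ and $x=\pm2y$, and the five pairs of lines from $\omega=\pm i$, you get $m(f_0(D(f_0)),0)=2+1+1+5=9$.
\item For $t\neq0$: the $\omega=-1$ factor becomes $x(x^2-y^2)(x^2-4y^2)+ty^6$. Its branch tangent to $x=0$ is $x=-\tfrac{t}{4}y^2+\cdots$, a series in $y^2$, so it is still a fold component. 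In the parameter $s=y^2$ its image is $(-\tfrac{t}{4}s+\cdots,\,s^2,\,\dots)$, which is smooth.
\end{itemize}
Hence $m(f_t(D(f_t)),0)=8$ for $t\neq0$, contradicting Lemma~\ref{lemma8}.

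Equivalently, take a generic plane $aX+bY+cZ=0$. You get $\gamma_0\colon(y^4,\,y^6-\tfrac{4b}{a}y^9+\cdots)$ and $\gamma_t\colon(y^4,\,y^6+ty^7+\cdots)$. These have characteristic exponents $(4;6,9)$ and $(4;6,7)$, so $\mu(\gamma_t)$ drops from $18$ to $16$.

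This identification of the fold branch and the explicit computation are what your proposal is missing.
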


Although the family $f_t$ in Lemma \ref{lemma4} is not Whitney equisingular, our next proposition shows that even if $F=(f_t,t)$ is Whitney equisingular (with $f_0$ a finitely determined quasihomogeneous map germ), $F(D(F))$ may not be bi-Lipschitz equisingular.

\begin{proposition}\label{lemma7} Consider $F=(f_t,t)$ the family of map germs from $(\mathbb{C}^2,0)$ to $(\mathbb{C}^3,0)$, where $$f_t(x,y)=(x,y^8+tx^2y^5,y(x^2-y^3)(x^2+y^3)).$$ The map germ $f=f_0$ is finitely determined and $F$ is Whitney equisingular. However, $F(D(F))$ is not bi-Lipschitz equisingular. In particular, $F$ is not bi-Lipschitz equisingular.

\end{proposition}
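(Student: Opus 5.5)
The argument has three parts: finite determinacy of $f_0$, Whitney equisingularity of $F$, and the failure of bi-Lipschitz equisingularity of $F(D(F))$. For the first two I would mirror the proof of Proposition \ref{lemma2}.

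\textbf{Finite determinacy and Whitney equisingularity.} The germ $f(x,y)=(x,y^8,y(x^2-y^3)(x^2+y^3))=(x,y^8,x^4y-y^7)$ is quasihomogeneous with weights $(3,1)$ for $(x,y)$ and degrees $(3,8,13)$. I would compute $D(f)$ by divided differences in the second variable. Since $f$ has corank $1$ and the form $(x,p(y),q(x,y))$, $D^2(f)$ is cut out in $\mathbb{C}^3=\{(x,y,z)\}$ by
\[
\frac{y^8-z^8}{y-z}=0 \quad\text{and}\quad \frac{q(x,y)-q(x,z)}{y-z}=0 .
\]
Eliminating $z$ gives $D(f)$. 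I would check that $D(f)$ is reduced; equivalently, $\mu(D(f))<\infty$. Then Theorem \ref{criterio} (or \cite[Cor. 3.5]{marar-mond}) gives finite determinacy. The deformation term $tx^2y^5$ has weighted degree $11$, which exceeds the weighted degree $8$ of the second coordinate. So, as in Proposition \ref{lemma2}, \cite[Cor. 4.4]{Slice} gives that $F$ is Whitney equisingular. If that corollary only covers terms of degree equal to the coordinate's degree, I would fall back on \cite[Th. 5.3]{marar-juanjo-guille}: check that $\mu(D(f_t))$ is constant and that the transverse slice $\gamma_t$ has constant Milnor number. Upper semicontinuity together with the positive-weight nature of the deformation should make this routine.

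\textbf{Failure of bi-Lipschitz equisingularity of $F(D(F))$.} By Lemma \ref{lemma8}, $F(D(F))$ is Whitney equisingular, so the only available obstruction is the change in the number of distinct tangent lines (or, more finely, the Lipschitz invariants of the branches). I would use the criterion already quoted in the paper: $F(D(F))$ is bi-Lipschitz equisingular if and only if its generic plane projection is a topologically trivial family of plane curves \cite[Cor. 3.6]{Giles}. Equivalently, as in the proof of Proposition \ref{lemma2}, I would use \cite[Th. 2.2]{Sampaio}, under which the number of distinct tangent lines must be constant.

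To get at the tangent cone, I would parametrize the branches of $f_t(D(f_t))$. The branches of $D(f_t)$ come from pairs $(y,z)$ with $z=\omega y$ and $\omega^8=1$, $\omega\neq1$ (perturbed when $t\neq0$). Along each branch the image is $(x,y^8+\dots,q)$, with $x$ expressed as a Puiseux series in $y$ from the second equation. For $t=0$, the second equation reads $x^4(1)-\frac{y^7-z^7}{y-z}\cdot\ldots=0$ type relations. These give $x^4\sim c_\omega y^6$, hence $x\sim y^{3/2}$. The image branches then have tangent directions determined by the lowest-order terms among $x\sim s^3$, $y^8\sim s^{16}$ and $q\sim s^{13}$, so all tangents are the $X$-axis.

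For $t\neq0$, the extra term $tx^2y^5$ should alter which coordinate controls the second-lowest order, or split coincident branches. I would then compare the relative contact orders (Lipschitz invariants) between pairs of image branches for $t=0$ and for $t\neq0$. The generic projection to $\mathbb{C}^2$ turns these into intersection multiplicities of plane branches. I would exhibit a pair whose contact order jumps, and conclude that the generic projection is not $\mu$-constant.

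\textbf{Main obstacle.} The hard step is this explicit Puiseux/contact-order computation on $F(D(F))$, or a direct computation of $\mu$ of a generic plane projection for $t=0$ versus $t\neq0$. I would first try a computer-algebra computation of the Fitting ideal $Fitt_1$ and of the projected family. The final claim then follows immediately: if $F$ were bi-Lipschitz equisingular, then $F(D(F))$ would be bi-Lipschitz equisingular by \cite[Prop. 3.1]{thuy}, which is a contradiction.
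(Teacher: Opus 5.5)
There is a genuine gap, and it starts with a computational slip. The germ is not quasihomogeneous for the weights $(3,1)$: in $y(x^2-y^3)(x^2+y^3)=x^4y-y^7$ the two monomials have weighted degrees $13$ and $7$. The correct weights are $w(x)=3$, $w(y)=2$, with degrees $(3,16,14)$. With these weights the term $tx^2y^5$ has weighted degree $16$, exactly that of $y^8$. Whitney equisingularity of $F$ therefore follows from the slicing corollary you cite because the deformation has the \emph{same} weighted degree; this is how the paper argues, together with Damon's result. It does not follow from a positive-weight argument. Your conclusion survives, but the stated reason is false. The same slip gives your order $q\sim s^{13}$ along the branches; the correct order is $s^{14}$. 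More importantly, it is not true that the third coordinate has a nonzero leading term on every branch.

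That last point is what your treatment of the main claim misses. Constancy of the number of tangent lines is only necessary for bi-Lipschitz equisingularity, not equivalent to it. Here it detects nothing, because every branch of $f_t(D(f_t))$ is tangent to the $X$-axis for every $t$. There is also no splitting of branches, since $F(D(F))$ is topologically trivial by Lemma \ref{lemma8}. What actually changes is the characteristic exponent of a \emph{single} branch of the generic projection.

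At $t=0$: on the component $z=-y$ of $D^2(f_0)$ the second divided difference is $x^4-y^6$. So $V(x^2-y^3)$ and $V(x^2+y^3)$ are identification components of $D(f_0)$ lying in the zero set of the third coordinate. Their common image is $u\mapsto(u^3,u^{16},0)$, whose generic plane projection has characteristic exponents $(3;16)$ and Milnor number $30$.

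For $t\neq0$ you must show that no branch of $D(f_t)$ stays in $V(x^4y-y^7)$. Write a branch of the weighted homogeneous curve $D^2(f_t)$ as $s\mapsto(as^3,bs^2,cs^2)$. Vanishing of $q=y(x^4-y^6)$ forces $b=0$ or $a^4=b^6$. Combine this with the defining equations $a^4=\sum_{i=0}^{6}b^ic^{6-i}$ and $\sum_{i=0}^{7}b^ic^{7-i}+ta^2\sum_{i=0}^{4}b^ic^{4-i}=0$. Each case is then impossible for small $t\neq0$: for instance, $a^4=b^6$ gives $c=0$ or $c^6=b^6$, and $c=-b$ leads to $\pm 2tb^3=0$.

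Hence the deformed branch is $(u^3,h_1u^{16},h_2u^{14})$ with $h_2\neq0$. Its generic projection has exponents $(3;14)$ and Milnor number $26$, so the projected family is not equisingular. The generic-projection criterion then shows that $F(D(F))$ is not bi-Lipschitz equisingular, and your final step (Lemma \ref{lemma9}) goes through. This is essentially the paper's argument. Your proposal only announces that some contact order ``should'' jump, and the picture you sketch, a uniform nonzero $q$-term on all branches, would hide the one branch where the change happens.
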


\begin{proof}
    Note that $f_0$ is a reflection map (see \cite{[5]}). By \cite[Th. 5.2]{zampiva} (see also \cite[Prop. 6.3]{Milena}) we have that \begin{eqnarray*}
        D(f_0)&=&V\left(\prod_{i=1}^{7}\left((x^2-y^3)(x^2+y^3)-\xi^i(x^2-\xi^{3i}y^3)(x^2+\xi^{3i}y^3)\right)\right),
    \end{eqnarray*} where $\xi\in\mathbb{C},\, \xi\ne1,$ is a primitive root of the unity of $7$-th order. It can be verified that $D(f_0)$ is a reduced curve. By \cite[Cor. 3.5]{marar-mond}, we conclude that $f_0$ is finitely determined. 
    
    Now, observe that $f_0$ is quasihomogeneous of type $(3,16,14;3,2).$ Furthermore, $F$ adds only a term with the same weighted degree in the second coordinate function of $f_0.$ Therefore, follows by \cite[Cor. 1]{Damon} and \cite[Cor. 4.4]{Slice} that $F$ is Whitney equisingular.

    In the sequel, one can verify that $D(f_0)^1=V(x^2-y^3)$ and $D(f_0)^2=V(x^2+y^3)$ are identification components of $D(f_0)$, that is, the restriction of $f_0$ to $D(f_0)^1$ (resp. to $D(f_0)^2$) is generically 1-to-1 and $f_0(D(f_0)^1)=f_0(D(f_0)^2).$ Consider $\varphi^1(u)=(u^3,u^2)$ and $\varphi^2(u)=(u^2,-u^3)$ parametrizations of $D(f_0)^1$ and $D(f_0)^2,$ respectively. Thus, $$f_0(\varphi^1(u))=(u^3,u^{16},0)=f_0(\varphi^2(u)).$$
    
    On the other hand, for $t\ne0,$ let $\varphi_t^1$ and $\varphi_t^2$ be parametrizations of $D(f_t)^1$ and $D(f_t)^2$, respectively. In this case, we have $$f_t(\varphi_t^j(u))=(u^3,h_{1,j}(t)u^{16},h_{2,j}(t)u^{14}),$$ with $h_{1,j},h_{2,j}\ne0$ for some $j=1,2.$ Now, consider a generic projection $p:(\mathbb{C}^3,0)\rightarrow(\mathbb{C}^2,0).$ For $t=0,$ we have that $p(f_0(D(f_0)^j))$ is an irreducible plane curve where the characteristics exponents are $3$ and $16,$ for $j=1,2.$ However, for $t\ne0,$ we have that $p(f_t(D(f_t)^j))$ is an irreducible curve where the characteristics exponents are $3$ and $14,$ for some $j=1,2.$ In particular, $\mu(p(f_t(D(f_t)^j)),0)<\mu(p(f_0(D(f_0)^j)),0),$ for some $j=1,2.$ By \cite[Cor. 1.2.3]{buch} and the upper semicontinuity property of the Milnor number, we conclude that $\mu(p(f_t(D(f_t))),0)$ is not constant. Therefore, $F(D(F))$ is not bi-Lipschitz trivial
\end{proof}\\

Table \ref{tabela2} presents the setting on the equisingularity of $D(F)$, $F(D(F))$, $D^2(F),$ and $D^2(F)/S_2$ under the hypothesis that $F$ is Whitney equisingular.

\begin{table}[!h]
\centering
{\def\arraystretch{1.6}\tabcolsep=12pt 

\begin{tabular}{ c | c | c | c | c }

\hline
\rowcolor{lightgray}
\multicolumn{5}{c}{\textbf{Equisingular condition: $F$ is Whitney equisingular}}\\
\hline
\textbf{Equising. Type}  &  $D(F)$ & $D^2(F)$ &  $F(D(F))$  & $D^2(F)/S_2$ \\
\hline
Topological Trivial     &  \makecell{\textcolor{blue}{\ding{51}} \\ (Lemma \ref{lemma1})} &  \makecell{\textcolor{blue}{\ding{51}} \\ (Lemma \ref{lemma1})}   &  \makecell{\textcolor{blue}{\ding{51}} \\ (Lemma \ref{lemma1})} & \makecell{\textcolor{blue}{\ding{51}} \\ (Lemma \ref{lemma1})} \\
\hline
Whitney equisingular    & \makecell{\textcolor{blue}{\ding{51}} \\ (Lemma \ref{lemma1})} &  \makecell{\textcolor{blue}{\ding{51}} \\ (Lemma \ref{lemma1})}  &  \makecell{\textcolor{blue}{\ding{51}} \\ (Lemma \ref{lemma8})}  & \makecell{\textcolor{red}{\ding{55}} \\ (Prop. \ref{lemma2})} \\

\hline
bi-Lipschitz equisingular   & \makecell{\textcolor{blue}{\ding{51}} \\ (Lemma \ref{lemma1})} &  \makecell{\textcolor{red}{\ding{55}} \\ (Prop. \ref{lemma2}) \\ \textcolor{blue}{\ding{51}} if $f$ is \\ homogeneous \\ (Th. \ref{lemma3})} &   \makecell{\textcolor{red}{\ding{55}} \\ (Prop. \ref{lemma7})} & \makecell{\textcolor{red}{\ding{55}} \\ (Prop. \ref{lemma2})}  \\

\hline

\end{tabular}
}
\caption{Equisingularity of families of double point curves under Whitney equisingularity of $F$.}\label{tabela2}
\end{table}

To finish this section, let us now consider that $F=(f_t,t)$ is a bi-Lipschitz unfolding of a finitely determined map germ from $(\mathbb{C}^2,0)$ to $(\mathbb{C}^3,0)$. 

Before presenting the next result, let us recall a notion of the genericity of a linear projection from $\mathbb{C}^n$ onto $\mathbb{C}^2$, restrict to a curve $\textbf{X}$ in $\mathbb{C}^n.$ This notion was presented by Briaçon, Galligo, and Granger in \cite[Ch. IV]{Briancon} (see also \cite[Def. 3.1]{Giles}).

\begin{definition}
    (a) Let $W$ be a representative of a germ of analytic space $(W,0) \subset (\mathbb{C}^n,0)$. We say that a vector $v \in C_5(W,0)$ if there are sequences of points $x_n,y_n \in W$ and numbers $\lambda_n \in \mathbb{C}$ such that $x_n\rightarrow 0$, $y_n\rightarrow 0$ and $\lambda_n\overline{(x_n-y_n)}\rightarrow v$ as $n\rightarrow \infty$ (see \rm\cite[\textit{Sec $3$}]{Whi652}\textit{).}\\

\noindent \textit{(b) Let $(\textbf{X},0)$ be a germ of curve in $(\mathbb{C}^n,0)$ and $\pi:(\mathbb{C}^n,0)\rightarrow (\mathbb{C}^2,0)$ be a linear projection. We say that the restriction $\pi|_{(\textbf{X},0)}: (\textbf{X},0)\rightarrow (\mathbb{C}^2,0)$ is a $C_5$-generic projection for the germ of curve $(\textbf{X},0)$ if the kernel of $\pi$ intersects $C_5(\textbf{X},0)$ transversally, that is, $ker(\pi)\cap C_5(\textbf{X},0)=(0, \cdots, 0)$ (see} \rm\cite[\textit{Ch. IV}]{Briancon}\textit{).}
\end{definition}

\begin{remark}\label{planos}
    Let $(\textbf{X},0)$ be a reduced curve germ in $\mathbb{C}^n$ such that the branches of $(\textbf{X},0)$ are lines (distinct). For each line $l_i$ in $(\textbf{X},0)$, denote by $v_i$ the director vector of $l_i$. For each pair $(i,j),$ with $i\ne j,$ consider the plane $H_{i,j}$ generated by $v_i$ and $v_j$ in $\mathbb{C}^n.$ By \rm{\cite[\textit{Th.} 3.2]{c5}}\textit{, we have that $C_5(\textbf{X},0)$ is the (finite) union of the planes $H_{i,j}.$}
 \end{remark}

The following lemma is a consequence of a result of Brasselet, Ruas, and Thuy (see \cite[Prop. 3.1]{thuy}).

\begin{lemma}\label{lemma9} Let $f:(\mathbb{C}^2,0)\rightarrow (\mathbb{C}^3,0)$ be a finitely determined map germ and consider an unfolding $F=(f_t,t)$ of $f$. If $F$ is bi-Lipschitz equisingular, then $D(F)$, $D^2(F),$ and $F(D(F))$ are bi-Lipschitz equisingular.
\end{lemma}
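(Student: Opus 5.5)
We argue directly from the bi-Lipschitz homeomorphisms defining the equisingularity of $F$, treating the three families in turn: $D(F)$ via Lemma~\ref{lemma1}, $F(D(F))$ from the target homeomorphism $\Psi$, and $D^2(F)$ from $\Phi\times\Phi$.

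\textbf{Step 1: $D(F)$.} Bi-Lipschitz equisingularity of $F$ implies topological triviality. Lemma~\ref{lemma1}(a) then gives that $D(F)$ is bi-Lipschitz equisingular. This part needs nothing new.

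\textbf{Step 2: $F(D(F))$.} Let $\Phi,\Psi$ be bi-Lipschitz homeomorphisms with $\Psi^{-1}\circ F\circ\Phi=I$, so that $F\circ\Phi=\Psi\circ I$.
\begin{itemize}
\item Set-theoretically, $F(D(F))$ is the closure of the set of points with at least two preimages, together with the images of singular points.
\item Since $\Phi$ is a homeomorphism commuting with the parameter, $\Psi$ maps the set of points of $I$ with two preimages onto the corresponding set for $F$. Likewise for the cross-cap locus, which is the closure of the double locus.
\item Hence $\Psi$ restricts to a bi-Lipschitz homeomorphism $F(D(F))\to f(D(f))\times T$ (up to inversion) preserving $t$.
\item Because $\Psi$ is bi-Lipschitz for the ambient metric, its restriction is bi-Lipschitz for the outer metric on the curve family. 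This is exactly the content of \cite[Prop. 3.1]{thuy}, which we invoke.
\end{itemize}

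\textbf{Step 3: $D^2(F)$.} Consider $\Phi\times\Phi$ on $\mathbb{C}^2\times\mathbb{C}^2\times\mathbb{C}$, acting diagonally in $t$. It is bi-Lipschitz, since a product of bi-Lipschitz maps is bi-Lipschitz for the product (outer) metric. Off the diagonal, $(x,x')\in D^2(f_t)$ if and only if $f_t(x)=f_t(x')$, and $\Phi$ conjugates $f_t$ to $f$. So $\Phi\times\Phi$ maps $D^2(F)\setminus\Delta$ onto $(D^2(f)\setminus\Delta)\times T$. The diagonal part of $D^2(f_t)$ consists only of isolated points (the origin and cross-caps, none of the latter near $0$ for finitely determined $f$). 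Both sets are closed, so taking closures gives a bi-Lipschitz homeomorphism $D^2(F)\to D^2(f)\times T$ commuting with the projection to $T$. The underlying reduced curves are all that matter for these metric notions.

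The main obstacle is the set-theoretic matching in Steps 2 and 3: showing that topological conjugacy of $F$ with the trivial unfolding preserves the double locus, including the cross-cap points lying in its closure. Our first attempt is a closure argument: the non-injectivity locus is intrinsic to the map, and the cross-cap points lie in its closure. If this runs into difficulty, we instead cite \cite[Prop. 3.1]{thuy} directly, which handles the case of a pair of bi-Lipschitz equivalent map germs.
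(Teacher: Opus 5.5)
Your proposal is correct and is essentially the paper's argument: the paper's proof is Lemma~\ref{lemma1} for $D(F)$ plus \cite[Prop. 3.1]{thuy} for $D^2(F)$ and $F(D(F))$, and your restrictions of $\Psi$ and of $\Phi\times\Phi$ to the double loci simply unpack what that proposition provides. In Step 3, passing to closures is justified by the density of the off-diagonal part in each $D^2(f_t)$, not by the closedness of the two sets; this density holds because $D^2(f_t)$ is a Cohen--Macaulay curve with only finitely many diagonal points.
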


\begin{proof}
    The result follows using the Lemma \ref{lemma1} and \cite[Prop. 3.1]{thuy}
\end{proof}

\begin{remark}\label{algoritmo}
In \rm{\cite[\textit{Sec.} 2.4]{teseguille}}\textit{, there exists a way to obtain the generators of $D^2(f)/S_2$ of a map germ $f$ from $(\mathbb{C}^2,0)$ to $(\mathbb{C}^3,0)$ in the corank 1 case. Initially, we consider the map $\psi:\mathbb{C}^3\rightarrow\mathbb{C}^3$ given by $\psi(x,y,z)=(x,y+z,(y-z)^2).$ Thus, $D^2(f)/S_2$ is generated by the ideal $(\psi^*)^{-1}(\mathcal{I}_2(f)^{S_2}),$ where $\mathcal{I}_2(f)$ is the ideal that defines $D^2(f).$}

\textit{Now, we present a} \rm{\texttt{Singular}} \textit{code} \rm{\cite{singular}} \textit{to calculate the generators of $D^2(f)/S_2$ in this setting. Let $f:(\mathbb{C}^2,0)\rightarrow(\mathbb{C}^3,0)$ be the map germ given by $$f(x,y)=(x,y^8,y^6+xy).$$ Using the following} \rm{\texttt{Singular}} \textit{code} \rm{\cite{singular}}

\begin{flushleft}

        \rm{\texttt{ring T=0,(X,Y,Z),ds;}}
        
        \rm{\texttt{ring S=0,(x,y,z),ds;}}
        
        \rm{\texttt{poly p=y8;}}
        
        \rm{\texttt{poly q=y6+xy;}}
        
\rm{\texttt{map h=T,x,y+z,(y-z)\^\,2;}}
        
        \rm{\texttt{ideal D2=(p-subst(p,y,z))/(y-z),(q-subst(q,y,z))/(y-z);}}\ \ \ // \textit{the ideal defining $D^2(f)$}
        
        \rm{\texttt{setring T;}}

        \rm{\texttt{ideal D2S2=preimage(S,h,D2);}}\ \ \ // \textit{the ideal defining $D^2(f)/S_2$}
        
        \rm{\texttt{minbase(D2S2);}}

        \rm{\texttt{//->\,D2S2[1]=16X+3YZ2+10Y3Z+3Y5}}

        \rm{\texttt{//->\,D2S2[2]=YZ3+7Y3Z2+7Y5Z+Y7}}
\end{flushleft} 
    \textit{we conclude that $D^2(f)/S_2$ is given by $$D^2(f)/S_2=V( 16X+3YZ^2+10Y^3Z+3Y^5,\,YZ^3+7Y^3Z^2+7Y^5Z+Y^7).$$}

\end{remark}

In the following proposition, we show that the bi-Lipschitz equisingularity of the unfolding $F$ fails to imply Whitney equisingularity or bi-Lipschitz equisingularity of $D^2(F)/S_2.$

\begin{proposition}\label{prop3.12}
    Let $f_t:(\mathbb{C}^2,0)\rightarrow(\mathbb{C}^3,0)$ be the family of map germ defined by $$f_t(x,y)=(x,y^8+tx^2y^5,y(x^2-2y^3)(x^2-(1/2)y^3)).$$ We have that $f=f_0$ is finitely determined and the unfolding $F=(f_t,t)$ is bi-Lipschitz equisingular. Furthermore, $D^2(F)/S_2$ is not Whitney equisingular. In particular, $D^2(F)/S_2$ is not bi-Lipschitz equisingular.
\end{proposition}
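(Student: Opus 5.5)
The proof has three parts: finite determinacy of $f_0$, bi-Lipschitz equisingularity of $F$, and the failure of Whitney equisingularity for $D^2(F)/S_2$.

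\textbf{Finite determinacy.} I would follow the pattern of Proposition \ref{lemma7}. The germ $f_0(x,y)=(x,y^8,y(x^2-2y^3)(x^2-(1/2)y^3))$ is a reflection map, since $y^8$ is invariant under $y\mapsto \xi y$ with $\xi^8=1$. By \cite[Th. 5.2]{zampiva}, $D(f_0)$ is then the union of the curves
\[
\bigl((x^2-2y^3)(x^2-\tfrac12 y^3)-\xi^i(x^2-2\xi^{3i}y^3)(x^2-\tfrac12\xi^{3i}y^3)\bigr)/(y-\xi^i y)\text{-type factors}, \quad i=1,\dots,7.
\]
I would check that this curve is reduced and conclude by \cite[Cor. 3.5]{marar-mond} that $f_0$ is finitely determined.

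\textbf{Bi-Lipschitz equisingularity of $F$.} First, $f_0$ is quasihomogeneous of type $(3,16,14;3,2)$, and the deformation term $tx^2y^5$ has weighted degree $16$, the same as $y^8$. Hence, as in Proposition \ref{lemma7}, \cite[Cor. 1]{Damon} and \cite[Cor. 4.4]{Slice} give that $F$ is Whitney equisingular. To upgrade to bi-Lipschitz, I would use that $f_0$ has corank $1$ and exhibit the triviality directly. The key observation is that $y^8+tx^2y^5$ can be absorbed by a weighted-homogeneous change of coordinates of the form $y\mapsto y\,u(x,y,t)$ combined with a target change. Alternatively, I would check that the coefficients $2$ and $1/2$ are chosen so that, along the identification components $x^2=2y^3$ and $x^2=\tfrac12y^3$, the images under $f_t$ keep the same leading exponents. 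Unlike in Proposition \ref{lemma7}, the $u^{14}$ term should not appear. Then $F(D(F))$, the slice and $D(F)$ are bi-Lipschitz trivial, and a bi-Lipschitz criterion for corank $1$ quasihomogeneous unfoldings (e.g.\ from \cite{thuy} or \cite{zariskiotoniel}) applies. This is the step I expect to be the main obstacle. If no clean criterion is available, I would try to construct the bi-Lipschitz homeomorphisms explicitly, using the weighted $\mathbb{C}^*$-action to integrate a controlled vector field.

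\textbf{$D^2(F)/S_2$ is not Whitney equisingular.} I would compute $D^2(F)$ by divided differences and then $D^2(F)/S_2$ via the map $\psi(x,y,z)=(x,y+z,(y-z)^2)$ and the \texttt{Singular} algorithm of Remark \ref{algoritmo}. I expect $D^2(f_t)/S_2$ to be an ICIS in $\mathbb{C}^3$ whose first generator is $cX+(\text{terms in }Y,Z)+t\,(\text{a term like }X^2Y^a)$. The topological triviality of $D^2(F)/S_2$ comes from Lemma \ref{lemma1}(c), so by \cite[Th. III.3]{Briancon} it suffices to show that the multiplicity jumps. I would read off the lowest-order terms of the two generators for $t=0$ and $t\neq0$. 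As in Proposition \ref{lemma2}, the $t$-term should lower the order of one generator, so $m(D^2(f_t)/S_2)<m(D^2(f_0)/S_2)$. Hence $D^2(F)/S_2$ is not Whitney equisingular. Since bi-Lipschitz equisingularity preserves multiplicity, it is not bi-Lipschitz equisingular either.
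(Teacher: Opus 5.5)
Your first part (reflection map, \cite{zampiva}, \cite[Cor. 3.5]{marar-mond}) follows the paper. So does your Whitney-equisingularity argument via \cite[Cor. 1]{Damon} and \cite[Cor. 4.4]{Slice}, and so does your strategy for the last claim (topological triviality from Lemma \ref{lemma1}(c) plus a multiplicity jump). The genuine gap is the bi-Lipschitz equisingularity of $F$, and none of the routes you sketch can close it.

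\textbf{Why your routes fail.}

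Your ``key observation'' is false. Absorbing $tx^2y^5$ by source and target coordinate changes would make $F$ $\mathcal{A}$-trivial. An $\mathcal{A}$-equivalence $f_t=\Psi_t\circ f_0\circ\Phi_t^{-1}$ carries $D^2(f_0)$ isomorphically and $S_2$-equivariantly onto $D^2(f_t)$ via $\Phi_t\times\Phi_t$. Then $D^2(f_t)/S_2\cong D^2(f_0)/S_2$ would have constant multiplicity, contradicting the jump ($15$ at $t=0$, $14$ for $t\neq0$) that you want to prove in your third part.

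Your alternative heuristic is also off. The curves $x^2=2y^3$ and $x^2=\frac12y^3$ are not even contained in $D(f_0)$. The third coordinate $g$ vanishes on them, and $g(x,\xi^iy)=0$, with $\xi$ a primitive $8$th root of unity and $1\le i\le7$, would force $\xi^{3i}\in\{1,4\}$, resp.\ $\{1,\frac14\}$, which is impossible. The constants $2,\frac12$ are chosen precisely so that no branch of $D(f_0)$ lies in $\{g=0\}$. Hence every branch of $f_t(D(f_t))$ has the form $(u^3,\beta_{i,t}u^{16},\gamma_{i,t}u^{14})$ with $\gamma_{i,t}\neq0$ for all $t$. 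The $u^{14}$ term is present throughout, not absent, and that is what separates this family from Proposition \ref{lemma7}.

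Moreover, \cite{thuy} and \cite{zariskiotoniel} give the opposite implications: bi-Lipschitz $F$ implies bi-Lipschitz double point curves, respectively Whitney equisingularity. No criterion deduces bi-Lipschitz triviality of $F$ from that of $D(F)$, $F(D(F))$ and the slice.

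Finally, integrating a weighted vector field uses only quasihomogeneity and a deformation of the same weighted degree. The family of Proposition \ref{lemma7} has exactly these features and is not bi-Lipschitz equisingular, so this cannot work without extra input.

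\textbf{The missing ingredient: the polar curve.} The paper considers $C(f_t)=P_1(f_t(\mathbb{C}^2))\cup f_t(D(f_t))$. Its image under a generic linear projection $\pi:\mathbb{C}^3\to\mathbb{C}^2$ is the discriminant of $\pi$ restricted to $S_t=f_t(\mathbb{C}^2)$. The argument runs as follows.
\begin{enumerate}
\item It finds the two polar branches from $\det J(\pi\circ f_t)$.
\item Using \cite[Th. 3.2]{c5}, it shows that the $C_5$-cones of the polar curve and of $f_t(D(f_t))$ equal $V(Y)$ for all $t$. So every branch of $\pi(C(f_t))$ has characteristic exponents $(3,14)$, and distinct branches meet with intersection multiplicity $42$ \cite{pham}.
\item Hence $\mu(\pi(C(f_t)))$ is constant and $S_t$ is generically linearly Zariski equisingular.
\item Then \cite[Th. 2.1]{criterio} yields a bi-Lipschitz trivialization of the image family. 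This is lifted to the source through the normalizations $f_t$ via \cite[Lemma 5]{bobadilla}, with a carrousel argument to make the lift bi-Lipschitz.
\end{enumerate}

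\textbf{A smaller point on your third part.} ``Reading off lowest-order terms'' is not enough here. No generator has a linear $X$ term. For $t\neq0$ the initial forms $Z^3$ and $X^2Z^2$ share the factor $Z$, so you must compute the intersection multiplicity with a generic plane, which gives $14$. At $t=0$ you must pass to a basis whose initial forms, of orders $3$ and $5$, are transverse, which gives $15$.
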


\begin{proof} As in the proof of Proposition \ref{lemma7}, it is not hard to prove that $f_0$ is finitely determined and $F$ is at least Whitney equisingular. We first show that the family of curves $C(f_t):=P_1(f_t(\mathbb{C}^2)) \cup f_t(D(f_t))$ is bi-Lipschitz equisingular, where $P_1(f_t(\mathbb{C}^2))$ is the polar curve of $f_t.$

Let $\pi$ be a linear generic projection from $\C^3$ to $\C^2$ defined by
\[
\pi(X,Y,Z)=\bigl(a_1X+b_1Y+c_1Z,\, a_2X+b_2Y+c_2Z\bigr).
\]

\noindent We first study the polar curve $P_1\bigl(f_t(\C^2)\bigr)$. It is the image of the curve $\widetilde{P_1\bigl(f_t(\C^2)\bigr)}:=f_t^{-1}\bigl(P_1(f_t(\C^2))\bigr)$, whose equation is given by the determinant of the Jacobian matrix of the map $\pi\circ f_t:(\C^2,0)\to(\C^2,0)$. Writing $\pi\circ f_t$ as
\[
\pi\circ f_t=\Bigl(a_1x+b_1(y^8+tx^2y^5)+c_1\bigl(x^4y-\tfrac52x^2y^4+y^7\bigr),\,
a_2x+b_2(y^8+tx^2y^5)+c_2\bigl(x^4y-\tfrac52x^2y^4+y^7\bigr)\Bigr),
\]
we obtain that the Jacobian matrix $J$ of $\pi\circ f_t$ is given by
\[
J=
\begin{bmatrix}
 a_1+2b_1txy^5+4c_1x^3y-5c_1xy^4 & 8b_1y^7+5b_1tx^2y^4+c_1x^4-10c_1x^2y^3+7c_1y^6
 \\
 a_2+2b_2txy^5+4c_2x^3y-5c_2xy^4 & 8b_2y^7+5b_2tx^2y^4+c_2x^4-10c_2x^2y^3+7c_2y^6
\end{bmatrix}.
\]
Therefore, the determinant of $J$ is
\[
det(J)=(a_1c_2-a_2c_1)(x^4-10x^2y^3+7y^6)+\cdots,
\]
where ``$\cdots$'' denotes terms of strictly higher weighted degree with respect to the weights $w(x)=3$ and $w(y)=2$ (the same weights as $f$). Since $\pi$ is generic, then we can assume that $a_1c_2-a_2c_1\neq 0$. Since
\[
 x^4-10x^2y^3+7y^6=
 \bigl(x^2-(5+3\sqrt2)y^3\bigr)
 \bigl(x^2-(5-3\sqrt2)y^3\bigr),
\]
it follows that $\widetilde{P_1\bigl(f_t(\C^2)\bigr)}$ (and hence also $P_1\bigl(f_t(\C^2)\bigr)$) has two branches, which we denote by $\widetilde{P_1\bigl(f_t(\C^2)\bigr)^{\,1}}$ and
$\widetilde{P_1\bigl(f_t(\C^2)\bigr)^{\,2}}$. These branches admit parametrizations of the form
\[
 \psi_1(u)=\bigl(u^3,\alpha u^2+\cdots\bigr),
 \qquad
 \psi_2(u)=\bigl(u^3,\beta u^2+\cdots\bigr),
\]
where ``$\cdots$'' denotes terms of higher order in the parametrization, and
\[
 \alpha^3=\frac{1}{5+3\sqrt2},
 \qquad
 \beta^3=\frac{1}{5-3\sqrt2}.
\]
Consequently, the two corresponding branches of the polar curve
$P_1(f_t(\mathbb{C}^2))$ admit parametrizations of the form
\[
f\circ\psi_1(u)=\bigl(u^3,\beta_{P_1^1}u^{16}+\cdots,\gamma_{P_1^1}u^{14}+\cdots\bigr),
\qquad
f\circ\psi_2(u)=\bigl(u^3,\beta_{P_1^2}u^{16}+\cdots,\gamma_{P_1^2}u^{14}+\cdots\bigr).
\]
A straightforward calculation shows that the coefficients $\gamma_{P_1^1}$ and $\gamma_{P_1^2}$ are distinct nonzero constants. Now, one can apply the $C_5$-procedure introduced by Giles Flores, Snoussi, and the first author (see \cite[Th. 3.2]{c5}) in order to conclude that $C_5\bigl(P_1(f_t(\C^2))\bigr)=V(Y)$ for all $t$. Therefore, $\pi (X,Y,Z)=(X,Z+\lambda Y)$, where $\lambda$ is a complex constant, is $C_5$-generic for $P_1(f_t(\mathbb{C}^2))$ for all $t$. In particular, the characteristic exponents of $\pi\Bigl(P_1\bigl(f_t(\C^2)\bigr)^i\Bigr)$ are $3$ and $14$, for $i=1,2$. Moreover, applying \cite[Lemme VI 3.4]{pham} (see also \cite[Lemma 4.3]{c5}) one can conclude that the intersection multiplicity between
\[
 \pi\Bigl(P_1\bigl(f_t(\C^2)\bigr)^1\Bigr)
 \quad\text{and}\quad
 \pi\Bigl(P_1\bigl(f_t(\C^2)\bigr)^2\Bigr)
\]
is equal to $42$, for any $t$. Therefore, the Milnor number of $\pi(P_1(f_t(\mathbb{C}^2)))$ is $135$ for any $t$.

We now analyze the topology of $\pi\bigl(f_t(D(f_t))\bigr)$. We begin with the case $t=0$, that is, with the curve $\pi\bigl(f(D(f))\bigr)$. Since $f$ is finitely determined, quasihomogeneous, and of corank $1$, it follows from \cite[Lemma 3.1]{otonielformulaJ} that $V(x)$ and $V(y)$ are not branches of $D(f)$. Moreover, $D(f)$ has $14$ irreducible components, which we denote by $D(f)^1,\ldots,D(f)^{14}$, each of them given by a defining equation in the form $x^2-\alpha_i y^3=0$, and these branches satisfy
\[
 f\bigl(D(f)^1\bigr)=f\bigl(D(f)^8\bigr),
 \
 f\bigl(D(f)^2\bigr)=f\bigl(D(f)^9\bigr),
 \ \ldots \ , \
 f\bigl(D(f)^7\bigr)=f\bigl(D(f)^{14}\bigr).
\]
Thus, in order to study $f(D(f))$, it is enough to study the images of the branches $D(f)^1,\ldots,D(f)^7$. By \cite{zampiva} (see also \cite[Prop. 6.3]{Milena}), we have that
\[
 D(f)=V\left(
 \prod_{k=1}^{7}
 \bigl((1-\theta_k)x^4-\tfrac52(1-\theta_k^4)x^2y^3+(1-\theta_k^7)y^6\bigr)
 \right),
\]
\noindent where $\theta_k=e^{\frac{2k\pi i}{8}}$. A straightforward computation shows that each one of the branches $D(f)^1,\ldots,D(f)^7$ can be parametrized by $\phi_i(u)=\bigl(u^3,\alpha_i u^2\bigr)$, where all $\alpha_i$ are nonzero and pairwise distinct. Consequently, each of the seven branches of $f(D(f))$, which we denote by $f(D(f))^1,\,f(D(f))^2,\,\ldots,\,f(D(f))^7$, can be parametrized by
\[
 f\bigl(\phi_i(u)\bigr)=\bigl(u^3,\beta_i u^{16},\gamma_i u^{14}\bigr),
\]
and a straightforward calculation shows that all $\gamma_i$ are nonzero and pairwise distinct. 

Furthermore, the deformation adds only a term of the same weighted degree in the second coordinate function of $f$, and therefore each branch of $f_t(D(f_t))$, which we denote by $f_t(D(f_t))^1, \ \ldots,\,f_t(D(f_t))^7$, can be parametrized by
\[
 \varphi_{i,t}(u)=\bigl(u^3,\beta_{i,t}u^{16},\gamma_{i,t}u^{14}\bigr),
\]
where $\beta_{i,0}=\beta_i,\gamma_{i,0}=\gamma_i$, and for each fixed $t$, all the coefficients $\gamma_{i,t}$ are nonzero and pairwise distinct. Again by \cite[Th. 3.2]{c5}, it follows that $C_5\bigl(f_t(D(f_t))\bigr)=V(Y)$ for all $t$.

So we conclude that $\pi (X,Y,Z)=(X,Z+\lambda Y)$ is $C_5$-generic for $f_t(D(f_t))$ for all $t$. In particular, the characteristic exponents of $\pi (f_t(D(f_t))^i)$ are $3$ and $14$, for $i=1,\cdots,7$. 

Moreover, applying again \cite[Lemme VI 3.4]{pham} one can conclude that the intersection multiplicity between $\pi (f_t(D(f_t))^i)$ and $\pi (f_t(D(f_t))^j)$, with $i \neq j$ is equal to $42$, for any $t$. Therefore, the Milnor number of $\pi (f_t(D(f_t))^i)$ is $1940$ for all $t$. Using the defining equation of $D(f)^i$ one can find the precise coefficients in all parametrizations of the branches $\pi(P_1(f_t(\mathbb{C}^2)^i)$ and $\pi(f_t(D(f_t))^j)$. Furthermore, a straightforward calculation shows that the numbers $\gamma_{i,t},\ \gamma_{j,t},\ \gamma_{P_1^1},\ \gamma_{P_1^2}$ are all distinct. Therefore, by \cite[Lemme VI 3.4]{pham} one can conclude that the intersection multiplicity between 
\[
 \pi\bigl(f_t(D(f_t))^i\bigr)
 \quad\text{and}\quad
 \pi\Bigl(P_1\bigl(f_t(\C^2)\bigr)^j\Bigr)
\]
is equal to $42$ for any $i,j$ and $t$ (with $\i \neq j$).

By Hironaka's formula \cite{Hironaka}, it follows that the Milnor number of $\pi(C(f_t))$ is equal to $3250$, for any $t$. Thus, the family of plane curves $\pi(C(f_t))$ is topologically trivial (see for instance \cite[Th. 5.3.1]{buch}). Therefore, it follows that $C(f_t)$ is bi-Lipschitz equisingular (see for instance \cite[Cor. 3.6]{Giles}). 

Now, we will use the fact that $C(f_t)$ is bi-Lipschitz equisingular to show that $F$ is bi-Lipschitz equisingular. By \cite[Th. 3.5]{Giles}, after shrinking the parameter space if necessary, there exists a linear projection
\[
p:\mathbb{C}^3\to\mathbb{C}^2
\]
which is $C_5$-generic for $C(f_t)$ for all $t$. Set
\[
\Delta_t:=p(C(f_t)).
\]
By \cite[Cor. 3.6]{Giles}, the family $\Delta_t$ is bi-Lipschitz equisingular. In particular, it is topologically trivial, so the embedded topological type of $\Delta_t$ is independent of $t$.

Let
\[
S_t:=f_t(\mathbb{C}^2)
\qquad\text{and}\qquad
\mathcal{S}:=F(\mathbb{C}^2\times\mathbb{C})\subset \mathbb{C}^3\times\mathbb{C}.
\]
Since the family $\Delta_t$ is topologically trivial, the family $S_t$ is generically linearly Zariski equisingular in the sense of \cite[Sec. 2.1]{criterio}. Hence, by \cite[Th. 2.1]{criterio}, after shrinking the neighborhoods if necessary, there exist neighborhoods $\Omega$ of $0$ in $\mathbb{C}^3\times\mathbb{C}$, $\Omega_0$ of $0$ in $\mathbb{C}^3$, and $T$ of $0$ in $\mathbb{C}$, together with a bi-Lipschitz homeomorphism
\[
\Phi:\Omega_0\times T\longrightarrow \Omega
\]
such that
\[
\Phi(x,y,z,0)=(x,y,z,0)
\quad\text{and}\quad
\Phi(S_0\times T)=\mathcal{S}.
\]
Moreover, the proof of \cite[Th. 2.1]{criterio} provides a Lipschitz stratification of $\mathcal{S}$ with strata
\[
\mathcal{S}\setminus C(F),\qquad C(F)\setminus T,\qquad T,
\]
and the trivialization $\Phi$ may be taken stratified with respect to this decomposition.

For each fixed $t$, denote
\[
\Phi_t:=\Phi(\,\cdot\,,t)|_{S_0}:S_0\to S_t.
\]
Since $\Phi_t$ preserves the singular locus, we may apply \cite[Lemma 5]{bobadilla} to $\Phi_t$. Because
\[
f:(\mathbb{C}^2,0)\to S_0
\qquad\text{and}\qquad
f_t:(\mathbb{C}^2,0)\to S_t
\]
are parametrizations, hence normalizations of $S_0$ and $S_t$, respectively, \cite[Lemma 5]{bobadilla} yields a unique homeomorphism
\[
h_t:(\mathbb{C}^2,0)\to(\mathbb{C}^2,0)
\]
such that
\[
f_t\circ h_t=\Phi_t\circ f.
\]
Thus there exists a unique homeomorphism
\[
h:\mathbb{C}^2\times T\to\mathbb{C}^2\times T,
\qquad
h(x,t):=(h_t(x),t),
\]
satisfying
\[
F\circ h=\Phi\circ I,
\]
where $I=(f,t)$ is the trivial unfolding. One can use the ``Carrousel Decomposition" of plane curves introduced in \cite{Le} (see also \cite{Pichon}, and also used in the proof of \cite[Th. 1.1]{Pichon}) and take the homeomorphism $\Phi$ such that the lifting $h$ is bi-Lipschitz, which proves that $F=(f_t,t)$ is bi-Lipschitz equisingular.

Now, by Lemma \ref{lemma1} we know that $D^2(f_t)/S_2$ is topologically trivial. Using the \texttt{Singular} procedure explaneid in Remark \ref{algoritmo} we obtain that

\begin{center}
$D^2(f)/S_2 = V(Z^3+64X^4+80X^2YZ+21Y^2Z^2+80X^2Y^3+35Y^4Z+7Y^6, 32X^4Y+40X^2Y^2Z+7Y^3Z^2+40X^2Y^4+14Y^5Z+3Y^7)$,
\end{center}

\noindent which has multiplicity $15$. On the other hand, we have that

\begin{center}
$D^2(f_t)/S_2 = V(Z^3+64X^4+80X^2YZ+21Y^2Z^2+80X^2Y^3+35Y^4Z+7Y^6, 32X^4Y+40X^2Y^2Z+7Y^3Z^2+40X^2Y^4+14Y^5Z+3Y^7-
\frac{t}{2}\bigl(X^2Z^2+10X^2Y^2Z+5X^2Y^4\bigr))$,
\end{center}

\noindent which has multiplicity $14$ for all $t \neq 0$. Therefore, $D^2(f_t)/S_2$ is not Whitney equisingular, and a fortiori, it is not bi-Lipschitz equisingular.\end{proof}\\

Table \ref{tabela3} presents our study on the equisingularity of $D(F)$, $F(D(F))$, $D^2(F),$ and $D^2(F)/S_2$ under the hypothesis that $F$ is bi-Lipschitz equisingular.

\begin{table}[!h]
\centering
{\def\arraystretch{1.6}\tabcolsep=12pt 

\begin{tabular}{ c | c | c | c | c }

\hline
\rowcolor{lightgray}
\multicolumn{5}{c}{\textbf{Equisingular condition: $F$ is bi-Lipschitz equisingular}}\\
\hline
\textbf{Equising. Type}  &  $D(F)$ & $D^2(F)$ &  $F(D(F))$  & $D^2(F)/S_2$ \\

\hline
Topological Trivial     &  \makecell{\textcolor{blue}{\ding{51}} \\ (Lemma \ref{lemma1})} &  \makecell{\textcolor{blue}{\ding{51}} \\ (Lemma \ref{lemma1})}   &  \makecell{\textcolor{blue}{\ding{51}} \\ (Lemma \ref{lemma1})} & \makecell{\textcolor{blue}{\ding{51}} \\ (Lemma \ref{lemma1})} \\

\hline
Whitney equisingular    & \makecell{\textcolor{blue}{\ding{51}} \\ (Lemma \ref{lemma1})} &  \makecell{\textcolor{blue}{\ding{51}} \\ (Lemma \ref{lemma1})}  &  \makecell{\textcolor{blue}{\ding{51}} \\ (Lemma \ref{lemma8})}  & \makecell{\textcolor{red}{\ding{55}} \\ (Prop. \ref{prop3.12})} \\

\hline
bi-Lipschitz equisingular   & \makecell{\textcolor{blue}{\ding{51}} \\ (Lemma \ref{lemma1})} &  \makecell{\textcolor{blue}{\ding{51}} \\ (Lemma \ref{lemma9})} &   \makecell{\textcolor{blue}{\ding{51}} \\ (Lemma \ref{lemma9})} & \makecell{\textcolor{red}{\ding{55}} \\ (Prop. \ref{prop3.12})}  \\

\hline

\end{tabular}
}
\caption{Equisingularity of families of double point curves under bi-Lipschitz equisingularity of $F$.}\label{tabela3}
\end{table}

\section{Families of curves of Henry-type}\label{henrytype}

$ \ \ \ \ $ An important family of curves was presented by J. P. G. Henry. In the following example, we recall this family of curves (see, for instance, \cite[Ex. V.1]{Briancon}):

\begin{example}[J. P. G. Henry]\label{henry}
    Define $\textbf{X}_t\subset \mathbb{C}^3$ by equations $$xy-tz=0\ \ \ \ \text{and}\ \ \ \ z^6+x^{15}+y^{10}=0.$$ Each $\textbf{X}_t$  is a quasihomogeneous complete intersection curve singularity. For any $t$, $\mu(\textbf{X}_t,0)=126,$ therefore it is topologically trivial. However, $m(\textbf{X}_0,0)=12,$ while $m(\textbf{X}_t,0)=10,$ for all $t\ne0$ sufficiently small.  This implies that $\textbf{X}_t$ is not Whitney equisingular. 
\end{example}

Some works have used this kind of family of curves to present counterexamples for certain problems (see, for instance, \cite[Ex. V.1]{Briancon}, \cite[Ex. 7.2.1]{buch}, and \cite[Ex. 4.21]{victor-juanjo}). Inspired by Example \ref{henry}, we say that a family of curves $\textbf{X}_t$ is of ``Henry-type" if each $\textbf{X}_t$ is a quasihomogeneous complete intersection curve and the family is topologically trivial but not Whitney equisingular. In Proposition \ref{lemma2}, the family of curves $D^2(F)/S_2$ is a family of curves of Henry-type, and provides a counterexample to one of the problems that we consider in this work.

Since this type of family of curves is a powerful tool for constructing counterexamples, to finish this work, we present families of curves of Henry-type as in Proposition \ref{lemma2}.

\begin{proposition}\label{prop1}
    For each odd integer $k\ge1$, let $f^k_t:(\mathbb{C}^2,0)\rightarrow(\mathbb{C}^3,0)$ be the family of quasihomogeneous, corank 1 map germs given by $$f^k_t(x,y)=(x,y^4+txy,y^{6k+1}+x^{2k}y).$$ Then, $f^k_0$ is finitely determined, $D^2(f_0^k)/S_2$ is a quasihomogeneous complete intersection. Furthermore, $\mu(D^2(f_t^k)/S_2,0)=(3k-1)(6k-1),$ for all $t,\, m(D^2(f_0^k)/S_2,0)=4k,$ while $m(D^2(f_t^k)/S_2,0)=3k,$ for all $t\ne0$ sufficiently small. In particular, $D^2(f^{k}_t)/S_2$ is a family of curves of Henry-type.
\end{proposition}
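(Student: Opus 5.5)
The plan is to compute $D^2(f^k_t)/S_2$ explicitly as in Remark \ref{algoritmo}, read the multiplicities from the equations, and get the Milnor number from a plane curve model at $t\neq 0$.

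\emph{Equations.} Write the points of $D^2$ as $(x,y,z)$, where $z$ is the second $y$-coordinate; the two $x$-coordinates coincide because the first component of $f^k_t$ is $x$. Divided differences give
\[
D^2(f^k_t)=V\bigl(y^3+y^2z+yz^2+z^3+tx,\; h_{6k}(y,z)+x^{2k}\bigr),
\]
where $h_{6k}=\sum_{i+j=6k}y^iz^j$. Both generators are $S_2$-invariant. Pass to the invariants $X=x$, $Y=y+z$, $Z=(y-z)^2$. Then $y^3+y^2z+yz^2+z^3=(y+z)(y^2+z^2)=\tfrac12 Y(Y^2+Z)$, and $h_{6k}=G_k(Y,Z)$ for a polynomial $G_k$. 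With weights $w(X)=3$, $w(Y)=1$, $w(Z)=2$ (induced from $w(x)=3$, $w(y)=1$, for which $f^k_t$ is quasihomogeneous of type $(3,4,6k+1;3,1)$ for every $t$), $G_k$ is weighted homogeneous of degree $6k$. Since both generators are invariant, Remark \ref{remarkdimdf}(b) gives
\[
D^2(f^k_t)/S_2=V\bigl(Y^3+YZ+2tX,\; X^{2k}+G_k(Y,Z)\bigr)\subset\mathbb{C}^3,
\]
which is quasihomogeneous for every $t$, of degrees $(3,6k)$. For $k=1$ this should recover Proposition \ref{lemma2} up to rescaling of $X$.

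\emph{Finite determinacy and topological triviality.} For $f^k_0$ I would show that $D(f^k_0)$ is reduced (equivalently $\mu(D(f^k_0))<\infty$) and apply Theorem \ref{criterio} / \cite[Cor. 3.5]{marar-mond}, exactly as in Proposition \ref{lemma2}. The term $txy$ has weighted degree $4$, equal to that of $y^4$, so $F$ is Whitney equisingular by \cite[Cor. 4.4]{Slice}, hence topologically trivial. Lemma \ref{lemma1}(c) then shows that $\mu(D^2(f^k_t)/S_2,0)$ is constant in $t$. The curve is one-dimensional, cut out by two equations in $\mathbb{C}^3$, and reduced with isolated singularity (it is a quotient of the reduced curve $D^2(f^k_t)$, which has an isolated singular point), so it is a quasihomogeneous ICIS.

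\emph{Multiplicities.} The key observation is that the coefficient of $Z^{3k}$ in $G_k$ is $G_k(0,Z)$. Putting $y=-z=s$ gives $Y=0$ and $Z=4s^2$, and $h_{6k}(s,-s)=s^{6k}\sum_{j=0}^{6k}(-1)^j=s^{6k}\neq 0$, so this coefficient is nonzero.
\begin{itemize}
\item At $t=0$: the initial forms are $YZ$ and $X^{2k}$, because $G_k$ has order at least $3k>2k$ when $k\ge 1$, and equal to $2$ exactly when $k=1$ only via $X^2$. These form a regular sequence, so $m=2\cdot 2k=4k$.
\item At $t\neq 0$: the first equation lets us eliminate $X=-(Y^3+YZ)/(2t)$. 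This leaves the plane curve $c_t(Y^3+YZ)^{2k}+G_k(Y,Z)=0$. Its term $Z^{3k}$ has order $3k$, every other monomial has order greater than $3k$, and $(Y^3+YZ)^{2k}$ has order $4k$. Hence $m=3k$.
\end{itemize}

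\emph{Milnor number.} The same plane model at $t\neq 0$ is quasihomogeneous of weighted degree $6k$ for weights $(1,2)$, with isolated singularity. Milnor--Orlik then gives
\[
\mu=\Bigl(\tfrac{6k}{1}-1\Bigr)\Bigl(\tfrac{6k}{2}-1\Bigr)=(6k-1)(3k-1).
\]
By constancy of $\mu$ this value holds for all $t$. As a cross-check, the Greuel--Hamm formula at $t=0$ should give the same value.

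The combination of constant $\mu$ with the jump in multiplicity from $4k$ to $3k$ gives the Henry-type conclusion via \cite[Th. III.3]{Briancon}. I expect the main obstacle to be the isolated-singularity claims for both curves, and the reducedness of $D(f^k_0)$ for general $k$. For the plane model I would argue directly via nondegeneracy of its Newton principal part. Its Newton polygon is the segment from $(0,3k)$ to $(6k,0)$, which passes through $(2k,2k)$.
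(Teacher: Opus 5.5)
You leave the finite determinacy of $f^k_0$ unproved (``I would show that $D(f^k_0)$ is reduced \dots exactly as in Proposition~\ref{lemma2}''). Your proposal never uses that $k$ is odd, yet this step needs it and fails for even $k$. Concretely, $y^3+y^2z+yz^2+z^3=\prod_{\lambda}(z-\lambda y)$ with $\lambda\in\{-1,i,-i\}$. On the plane $z=\lambda y$ your second generator becomes
\[
x^{2k}+h_{6k}(y,\lambda y)=x^{2k}+c_\lambda\, y^{6k},\qquad c_\lambda=\frac{1-\lambda^{6k+1}}{1-\lambda}.
\]
If $k$ is odd, then $6k\equiv 2\pmod 4$, so $\lambda^{6k+1}=\lambda^3$ and $c_\lambda=1+\lambda+\lambda^2$ takes the three distinct values $1,i,-i$. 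Hence $D(f^k_0)=V\bigl(\prod_\lambda(x^{2k}+c_\lambda y^{6k})\bigr)$ is reduced, with $6k$ distinct smooth branches $x=\omega y^3$. This is the factorization the paper reads off from Zampiva's formula for reflection maps. If $k$ is even, then $\lambda^{6k+1}=\lambda$ and $c_\lambda=1$ for all three $\lambda$, so $D(f^k_0)=V\bigl((x^{2k}+y^{6k})^3\bigr)$ is not reduced. Indeed, the four points $(x,\lambda y)$ with $\lambda^4=1$ then have the same image along $x^{2k}+y^{6k}=0$. That is a curve of quadruple points, so $f^k_0$ is not finitely determined.

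This step carries weight in your argument beyond the first claim. The Whitney equisingularity of $F$ via \cite[Cor.~4.4]{Slice}, the resulting $\mu$-constancy via Lemma~\ref{lemma1}(c), and your reducedness of $D^2(f^k_t)$ (from which you get that the quotient is an ICIS) all presuppose finite determinacy. Your curve-level computations never see the parity of $k$; only the statements about $f^k_0$ and $F$ need $k$ odd.

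With the computation above inserted, the rest of your proof is correct and takes a more explicit route than the paper. The paper gets $\mu$ from Mond's formula and the Marar--Mond relations, and simply asserts the two colengths $4k$ and $3k$. You instead read the multiplicities off the initial forms, using $G_k(0,Z)=4^{-3k}Z^{3k}\neq 0$. You obtain $(6k-1)(3k-1)$ from Milnor--Orlik after eliminating $X$ at $t\neq 0$. Your exponent $X^{2k}$ is also the correct one; the paper's displayed $x^2+p_k$ is a misprint.
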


\begin{proof}
    Fix an odd integer $k\ge1.$ Note that $f_0^k$ is a reflection map (see \cite{[5]}). Then, by \cite[Th. 5.2]{zampiva} (see also \cite[Prop. 6.3]{Milena}) we have that \begin{eqnarray*}
        D(f_0^k)&=&V(((1-\xi^{3})y^{6k}+(1-\xi)x^{2k})((1-\xi^2)y^{6k}+(1-\xi^2)x^{2k})((1-\xi)y^{6k}+(1-\xi^3)x^{2k})),
    \end{eqnarray*} where $\xi\in\mathbb{C},\, \xi\ne1,$ is a primitive root of the unity of the fourth order. It can be verified that $D(f^k_0)$ is a reduced curve By \cite[Cor. 3.5]{marar-mond}, we conclude that $f_0^k$ is finitely determined. In particular, $D^2(f_0^k)/S_2$ is a quasihomogeneous complete intersection. Now, consider the map $\psi:\mathbb{C}^3\rightarrow\mathbb{C}^3$ given by $\psi(x,y,z)=(x,y+z,(y-z)^2).$ This map is such that $\psi(D^2(f_{t}^k))\simeq D^2(f_{t}^k)/S_2,$ locally. Therefore, $D^2(f_t^k)/S_2$ is generated by the ideal $(\psi^*)^{-1}(\mathcal{I}_2(f_t^k)^{S_2}),$ where $\mathcal{I}_2(f_t^k)$ is the ideal that defines $D^2(f_t^k).$ In this case, it can be verified that $$D^2(f_{t}^k)/S_2=V(\sigma_1^3+\sigma_1\sigma_2+2tx,x^2+p_k(\sigma_1,\sigma_2)),$$ where $\sigma_1=y+z,\sigma_2=(y-z)^2$ and $p_k(\sigma_1,\sigma_2)$ is a quasihomogeneous polynomial of degree $6k$, and the weights of $\sigma_1$ and $\sigma_2$ are 1 and 2, respectively. Furthermore, by Mond's formula \cite{form} and by Marar and Mond formulas \cite{marar-mond}, we have that $\mu(D^2(f_t^k)/S_2,0)=(3k-1)(6k-1),$ for all $t.$ On the other hand, let $H=V(ax+b\sigma_1+c\sigma_2)\subset\mathbb{C}^3$ be a generic plane. Then, $$m(D^2(f_{0}^k)/S_2,0)=dim_\mathbb{C}\dfrac{\mathbb{C}\left\{x,\sigma_1,\sigma_2\right\}}{(\sigma_1^3+\sigma_1\sigma_2,x^{2k}+p_k(\sigma_1,\sigma_2),ax+b\sigma_1+c\sigma_2)}=4k,$$ while $$m(D^2(f_{t}^k)/S_2,0)=dim_\mathbb{C}\dfrac{\mathbb{C}\left\{x,\sigma_1,\sigma_2\right\}}{(2xt+\sigma_1^3+\sigma_1\sigma_2,x^{2k}+p_k(\sigma_1,\sigma_2),ax+b\sigma_1+c\sigma_2)}=3k,$$ for all $t\ne0$ sufficiently small.
\end{proof}

\begin{proposition}\label{prop2}
    For each $k\ge2$, let $g^k_t:(\mathbb{C}^2,0)\rightarrow(\mathbb{C}^3,0)$ be the family of quasihomogeneous, corank 1 map germs given by $$g^k_t(x,y)=(x,y^{2k}+txy,y^{4k-1}+x^{2}y).$$ Then, $g^k_0$ is finitely determined, $D^2(g_0^k)/S_2$ is a quasihomogeneous complete intersection. Furthermore, $\mu(D^2(g_t^k)/S_2,0)=2(k-1)(4k-3),$ for all $t,$ $m(D^2(g_0^k)/S_2,0)=2k,$ while $m(D^2(g_t^k)/S_2,0)=2k-1,$ for all $t\ne0$ sufficiently small. In particular, $D^2(g^{k}_t)/S_2$ is a family of curves of Henry-type.
\end{proposition}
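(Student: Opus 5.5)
The plan is to follow the proof of Proposition \ref{prop1}. The key observation is that $g^k_t$ is quasihomogeneous for every $t$: with weights $w(x)=2k-1$ and $w(y)=1$, the second coordinate $y^{2k}+txy$ has weighted degree $2k$ and the third, $y^{4k-1}+x^2y$, has weighted degree $4k-1$. So the unfolding only adds a term of the same weighted degree.

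\textbf{Step 1 (finite determinacy and constancy of $\mu$).} Since $g^k_0$ is a reflection map, I would use \cite[Th. 5.2]{zampiva} (or \cite[Prop. 6.3]{Milena}) to write $D(g^k_0)$ explicitly. Checking that it is reduced gives finite determinacy by \cite[Cor. 3.5]{marar-mond}. Because the deformation is weighted-homogeneous of the same degree, \cite[Cor. 4.4]{Slice} gives that $G=(g^k_t,t)$ is Whitney equisingular, hence topologically trivial. Lemma \ref{lemma1}(c) then shows that $\mu(D^2(g^k_t)/S_2,0)$ is constant in $t$. To compute its value at $t=0$, I would use the same computation as in Proposition \ref{prop1}:
\begin{itemize}
\item obtain $\mu(D^2(g^k_0))$ from the Greuel--Hamm formula, since $D^2(g^k_0)$ is a quasihomogeneous ICIS in $\mathbb{C}^3$ with weights $(2k-1,1,1)$ and degrees $(2k-1,4k-2)$;
\item obtain $C(g^k_0)$ from Mond's formula \cite{form};
\item substitute both into $\mu(D^2)=2\mu(D^2/S_2)+C-1$, which should give $2(k-1)(4k-3)$.
\end{itemize}

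\textbf{Step 2 (equations of the quotient).} Divided differences give
\[
D^2(g^k_t)=V\bigl(h_{2k-1}(y,z)+tx,\; h_{4k-2}(y,z)+x^2\bigr),
\]
where $h_j(y,z)=\sum_{a+b=j}y^az^b$. Both generators are $S_2$-invariant. By Remark \ref{remarkdimdf}(b) and Remark \ref{algoritmo}, the quotient is therefore
\[
D^2(g^k_t)/S_2=V\bigl(P(\sigma_1,\sigma_2)+tx,\; x^2+Q(\sigma_1,\sigma_2)\bigr),
\]
with $\sigma_1=y+z$ and $\sigma_2=(y-z)^2$. Here $P$ and $Q$ are weighted homogeneous of degrees $2k-1$ and $4k-2$ for the weights $w(\sigma_1)=1$, $w(\sigma_2)=2$, $w(x)=2k-1$. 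This shows that $D^2(g_0^k)/S_2$ is a quasihomogeneous complete intersection.

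Writing $y=(\sigma_1+d)/2$ and $z=(\sigma_1-d)/2$ with $d^2=\sigma_2$, a binomial expansion gives the lowest-order parts:
\begin{itemize}
\item $P=c\,\sigma_1\sigma_2^{k-1}+(\text{terms of order}>k)$ with $c\neq0$, so $P$ has ordinary order $k$;
\item $Q=c'\sigma_2^{2k-1}+(\text{terms of order}\geq 2k)$ with $c'\neq0$, so $Q$ has ordinary order $2k-1$.
\end{itemize}

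\textbf{Step 3 (multiplicity for $t\neq0$).} The first equation contains the linear term $tx$, so the curve is the graph $x=-P/t$ over the plane curve
\[
P^2/t^2+Q=0\subset(\mathbb{C}^2,0)_{\sigma_1,\sigma_2}.
\]
Projection to the graph's base preserves multiplicity. Since $P^2$ has order $2k$ and $Q$ has order $2k-1$ with initial form $c'\sigma_2^{2k-1}$, the plane curve has order $2k-1$. Hence $m(D^2(g^k_t)/S_2,0)=2k-1$ for $t\neq 0$.

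\textbf{Step 4 (multiplicity at $t=0$).} This is the main obstacle. We must show that
\[
\dim_\mathbb{C}\mathbb{C}\{x,\sigma_1,\sigma_2\}/\bigl(P,\;x^2+Q,\;ax+b\sigma_1+c\sigma_2\bigr)=2k
\]
for a generic plane. Eliminating $x$ reduces this to the intersection multiplicity of $P$ with $(\alpha\sigma_1+\beta\sigma_2)^2+Q$ in the $(\sigma_1,\sigma_2)$-plane. The naive tangent-cone argument fails here, because both initial forms contain the line $\sigma_1=0$. So I would instead decompose $P$ into its branches using quasihomogeneity. The factor $\sigma_1$ together with the branches $\sigma_2=\lambda_i\sigma_1^2$ intersect the second curve with computable multiplicities. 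Alternatively, I would use the quasihomogeneous structure directly: the multiplicity is the degree of the projection to a generic line, computable via the weights. As a last resort I would verify the count with \texttt{Singular}, as in the proof of Proposition \ref{prop1}.

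Combining Steps 1--4 gives constant $\mu$ and a jump in multiplicity from $2k$ to $2k-1$. The family therefore fails Whitney equisingularity by \cite[Th. III.3]{Briancon}, so it is of Henry-type.
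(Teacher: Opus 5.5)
Your route is the paper's: finite determinacy from the reflection-map formula for $D(g^k_0)$, quotient equations $V(tx+P,\,x^2+Q)$ in the coordinates $(x,\sigma_1,\sigma_2)$, $\mu$ from the cross-cap count together with $\mu(D^2)=2\mu(D^2/S_2)+C-1$, and multiplicities from a generic plane section.

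Steps 1--3 are correct. Greuel--Hamm gives $\mu(D^2(g^k_0))=(4k-3)^2$, and $C(g^k_0)=\dim_{\mathbb{C}}\mathcal{O}_2/(y^{2k-1},\,x^2+(4k-1)y^{4k-2})=4k-2$. Together these indeed give $2(k-1)(4k-3)$. Your elimination argument for $t\neq0$ is a cleaner justification than the paper's bare dimension count: the curve is isomorphic to the plane curve $P^2/t^2+Q=0$, which has order $2k-1$.

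The only unfinished point is Step 4, and the obstacle you describe there does not exist. After eliminating $x$ with the generic plane, the second equation is $(\alpha\sigma_1+\beta\sigma_2)^2+Q$. Since $\mathrm{ord}\,Q=2k-1\ge 3$, its initial form is $(\alpha\sigma_1+\beta\sigma_2)^2$, so its tangent cone is the generic line $\alpha\sigma_1+\beta\sigma_2=0$. You apparently compared $\mathrm{in}(P)=c\,\sigma_1\sigma_2^{k-1}$ with $\mathrm{in}(Q)=c'\sigma_2^{2k-1}$ instead. For $\alpha\beta\neq0$ this line differs from the tangents $\sigma_1=0$ and $\sigma_2=0$ of $\mathrm{in}(P)$. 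Hence the intersection multiplicity is simply $\mathrm{ord}(P)\cdot 2=2k$. Equivalently, in $\mathbb{C}^3$ the initial forms $\sigma_1\sigma_2^{k-1}$ and $x^2$ form a regular sequence, so the tangent cone of $V(P,x^2+Q)$ has degree $2k$.

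Your branch-decomposition fallback also works and gives the same count. Write $P$ as a constant times $\sigma_1\prod_{i=1}^{k-1}(\sigma_2-\lambda_i\sigma_1^2)$ with distinct nonzero $\lambda_i$.
\begin{itemize}
\item The component $\sigma_1=0$ carries the single branch $x^2+c'\sigma_2^{2k-1}=0$, of multiplicity $2$.
\item Each $\sigma_2=\lambda_i\sigma_1^2$ carries two smooth branches $x=\pm a_i\sigma_1^{2k-1}$, because $Q(1,\lambda_i)\neq0$. This holds since the $\lambda_i$ come from $2k$-th roots of unity, the roots of $Q(1,\cdot)$ come from $(4k-1)$-th roots of unity, and $\gcd(2k,4k-1)=1$.
\end{itemize}
The total is $2+2(k-1)=2k$.
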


\begin{proof}
    Fix an integer $k\ge2.$ Note that $g_0^k$ is a reflection map (see \cite{[5]}). Then, by \cite[Th. 5.2]{zampiva} (see also \cite[Prop. 6.3]{Milena}) we have that \begin{eqnarray*}
        D(g_0^k)&=&V\left(\prod_{j=1}^{2k-1}((1-(\xi^j)^{4k-1})y^{4k-2}+(1-(\xi^j))x^2)\right)\\
        &=&V\left(\prod_{j=1}^{2k-1}((1-\xi^{2k-j})y^{4k-2}+(1-\xi^j)x^2)\right),
    \end{eqnarray*} where $\xi\in\mathbb{C},\, \xi\ne1,$ is a primitive root of the unity of $2k$-th order. It can be verified that $D(g^k_0)$ is a reduced curve. By \cite[Cor. 3.5]{marar-mond}, we conclude that $g_0^k$ is finitely determined. In particular, $D^2(g_0^k)/S_2$ is a quasihomogeneous complete intersection. Now, consider the map $\psi:\mathbb{C}^3\rightarrow\mathbb{C}^3$ given by $\psi(x,y,z)=(x,y+z,(y-z)^2).$ We have that $D^2(g_t^k)/S_2$ is generated by the ideal $(\psi^*)^{-1}(\mathcal{I}_2(g_t^k)^{S_2}),$ where $\mathcal{I}_2(g_t^k)$ is the ideal that defines $D^2(g_t^k).$ In this case, one can verify that $$D^2(g_{t}^k)/S_2=V(tx+p_k(\sigma_1,\sigma_2),x^2+q_k(\sigma_1,\sigma_2)),$$ where $\sigma_1=y+z,\sigma_2=(y-z)^2$ and $p_k(\sigma_1,\sigma_2)$ is a quasihomogeneous polynomial of degree $2k-1$, $q_k(\sigma_1,\sigma_2)$ is a quasihomogeneous polynomial of degree $4k-2,$ and the weights of $\sigma_1$ and $\sigma_2$ are 1 and 2, respectively. Furthermore, by Mond's formula \cite{form} and by Marar and Mond formulas \cite{marar-mond}, we obtain that $\mu(D^2(f_t^k)/S_2,0)=2(k-1)(4k-3),$ for all $t.$ On the other hand, let $H=V(ax+b\sigma_1+c\sigma_2)\subset\mathbb{C}^3$ be a generic plane. Then, $$m(D^2(g_{0}^k)/S_2,0)=dim_\mathbb{C}\dfrac{\mathbb{C}\left\{x,\sigma_1,\sigma_2\right\}}{(p_k(\sigma_1,\sigma_2),x^{2}+q_k(\sigma_1,\sigma_2),ax+b\sigma_1+c\sigma_2)}=2k,$$ while $$m(D^2(g_{t}^k)/S_2,0)=dim_\mathbb{C}\dfrac{\mathbb{C}\left\{x,\sigma_1,\sigma_2\right\}}{(tx+p_k(\sigma_1,\sigma_2),x^{2}+q_k(\sigma_1,\sigma_2),ax+b\sigma_1+c\sigma_2)}=2k-1,$$ for all $t\ne0$ sufficiently small.
\end{proof}

\section{Some double point formulas}

$ \ \ \ \ $ Let $f:(\mathbb{C}^n,0)\rightarrow(\mathbb{C}^{2n-1},0)$ be a finitely determined map germ, with $n\ge3.$ In this section, we present a convenient analytic structure for the curves $D(f)$ and $f(D(f))$ and we also provide a positive answer to Question 3 in the Introduction section.

We know that the curves $D^2(f)$ and $D(f)^{red}$ are reduced curves, while $D(f)$ is only a generically reduced curve. In this case, we can consider the $\mathcal{A}$-invariants numbers of $f$ $$\mu(D^2(f),0),\ \ \mu(D(f)^{red},0), \ \ \text{and}\ \ \mu(D(f),0).$$

Except when $f$ has the Boardman symbol $\Sigma^{1,0}$ (see \cite[Ex. 5.3]{tese}), a natural question that arises at this point is whether it is possible to present a convenient analytic structure for $f(D(f)).$ The most natural structure would be that given by the first Fitting ideal of a presentation matrix of $f_*\mathcal{O}_n,$ as in the case of map germs from $(\mathbb{C}^2,0)$ to $(\mathbb{C}^3,0).$ However, little is known about the behavior of the analytic structure given by the Fitting ideals of $f_*\mathcal{O}_n$ when $n\ge3.$

In general, if $f:(\mathbb{C}^n,0)\rightarrow(\mathbb{C}^p,0)$ is a finite map germ and $p-n\ge2,$ then there is no algorithm yet to effectively calculate the
presentation matrix of $f_*\mathcal{O}_n$ as in the case of finite map germs from $(\mathbb{C}^n,0)$ to $(\mathbb{C}^{n+1},0).$ Therefore, as in \cite{victor-juanjo}, let us consider $f(D(f))$ from a geometric point of view. In this way, we can consider some geometric concepts that can be defined without the need for an analytic structure on $f(D(f))$, for example, the polar multiplicities $m(f(D(f)),0)$ and $m_1(f(D(f)),0).$ 

\begin{remark}
    As in the case of map germ from $(\mathbb{C}^2,0)$ to $(\mathbb{C}^3,0),$ we can also define the quotient double point space $D^2(f)/S_2$ for map germs from $(\mathbb{C}^n,0)$ to $(\mathbb{C}^{2n-1},0),$ with $n\ge3.$ For this space, we consider the reduced structure.
\end{remark}

Now, let $F=(f_t,t)$ be an unfolding of a finitely determined map germ $f=f_0$ from $(\mathbb{C}^n,0)$ to $(\mathbb{C}^{2n-1},0),$ with $n\ge3.$ Consider the complex spaces $D^2(F)$ in $(\mathbb{C}^n\times\mathbb{C}^n\times\mathbb{C},0),$ $D(F)$ in $(\mathbb{C}^n\times\mathbb{C},0),$ $F(D(F))$ in $(\mathbb{C}^{2n-1}\times\mathbb{C},0),$ and $D^2(F)/S_2$ in $(\mathbb{C}^n\times\mathbb{C}^n\times\mathbb{C}/S_2,0).$ Let $p:D^2(F)\rightarrow(\mathbb{C}^n,0)$ be a projection onto the first factor. Denoting by $I_F=\sqrt{Fitt_0(p_*\mathcal{O}_{D^2(F)})}$, we will consider the reduced analytic structure for $D(F)$ given by $$D(F)=V(I_F).$$ Furthermore, let $J_F$ be the ideal given by $\sqrt{Fitt_1(F_*\mathcal{O}_n)}.$ In this case, the analytic structure that we consider for $F(D(F))$ is given by $$F(D(F))=V(J_F).$$ One can verify that these double point curves do not depend on the choice of $F.$ Now, we consider the analytic structure of $D(f)$ given by \begin{equation}\label{df}
    D(f)=V(I_F,t),
\end{equation} and the analytic structure of $f(D(f))$ is given by \begin{equation}\label{fdf}
    f(D(f))=V(J_F,t).
\end{equation}

Let $\pi:D(F)\rightarrow(\mathbb{C},0)$ be a projection onto the last factor. Since the Fitting ideals have the property of being compatible with respect to the base change (see \cite{greuel-lossen}), we obtain that the analytic structure of the fibers of $\pi,$ which coincides with the analytic structure of the curves $D(f_t),$ given by each ideal $\sqrt{Fitt_0(p_*\mathcal{O}_{D^2(f_t)})},$ that is, $\pi^{-1}(t)=D(f_t).$ In this case, $\pi:D(F)\rightarrow(\mathbb{C},0)$ is an unfolding of the plane curve $D(f).$

For more details on this construction, see \cite[Sec. 5.2]{tese}. Now, inspired by the results of Jorge-Pérez and Nuño-Ballesteros in \cite{victor-juanjo}, we have the following theorem, which provides formulas to the invariants $\mu(D(f),0)$ and $\mu(D(f)^{red},0)$.

\begin{theorem}\label{TEO2}
    Let $f:(\mathbb{C}^n,0)\rightarrow(\mathbb{C}^{2n-1},0)$ be a finitely determined map germ, with $n\ge3.$ Then\\

    \noindent(a) $\mu(D(f),0)=\mu(D^2(f),0).$\\

    \noindent(b) $\mu(D(f)^{red},0)=\mu(D^2(f),0)+2\epsilon(D(f),0),$ where $\epsilon(D(f),0)$ is the epsilon invariant of $D(f)$ at the origin.
\end{theorem}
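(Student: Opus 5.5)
The plan is to realize all the curves involved as special fibres of flat one-parameter families whose general fibres are smooth, and to read off the Milnor numbers topologically, as the first Betti number of the smoothing fibre, in the spirit of Buchweitz–Greuel and of \cite{victor-juanjo}.

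\textbf{Setting up the smoothings.} Take $F=(f_t,t)$ to be a stabilization of $f$, which exists because we are in Mather's nice dimensions. For $t\neq 0$ the map $f_t$ is stable on a small ball. Since $n\ge3$ and the target is $\mathbb{C}^{2n-1}$, its only singularities there are $\Sigma^{1,0}$ points and transverse double points, and there are no triple points.

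Consequently $D^2(f_t)$ is a smooth curve. Moreover, the projection $p_t\colon D^2(f_t)\to D(f_t)$ is injective: each point of $D(f_t)$ has a unique partner, because triple points are absent. Its differential is also injective, because the double points are transverse. Hence $p_t$ is an isomorphism onto the reduced curve $D(f_t)$, which is therefore smooth. In particular $\pi^{-1}(t)=D(f_t)$ is smooth, which is consistent with the base-change compatibility of the Fitting ideals recalled just before the theorem.

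Next I would check flatness. $D(F)=V(I_F)$ is reduced by construction and of pure dimension $2$, so $t$ is a nonzerodivisor on $\mathcal{O}_{D(F)}$. Thus $\pi\colon D(F)\to(\mathbb{C},0)$ is flat, and it is a smoothing of $D(f)=V(I_F,t)$ with its possibly non-reduced structure. Similarly, $D^2(F)$ is Cohen–Macaulay, by the analogue of Remark \ref{remarkdimdf}(a), so $D^2(F)\to\mathbb{C}$ is a flat smoothing of the reduced curve $D^2(f)$.

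\textbf{Part (a).} For smoothings of curve germs, possibly generically reduced only, I take $\mu$ to be the first Betti number of the Milnor fibre, equivalently $1-\chi$ of that fibre. For a reduced curve this agrees with Buchweitz–Greuel's $\mu=2\delta-r+1$. By the previous paragraph, the Milnor fibre of $D(f)$ is $D(f_t)\cap B_\epsilon$, and the Milnor fibre of $D^2(f)$ is $D^2(f_t)\cap B'_\epsilon$. The isomorphism $p_t$ identifies these two fibres, after choosing the balls compatibly; this can be done since $p$ is finite, so a conic-structure argument applies. Hence $\mu(D(f),0)=\mu(D^2(f),0)$.

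\textbf{Part (b).} Here I would use a $\delta$-invariant comparison. For the generically reduced curve $X=D(f)$, I define $\epsilon(X,0)=\dim_\mathbb{C} H^0_{\mathfrak m}(\mathcal{O}_X)$, the length of the embedded (zero-dimensional) part, which is also the kernel of $\mathcal{O}_X\to\mathcal{O}_{X^{red}}$. Then I set $\delta(X)=\dim\overline{\mathcal{O}}/\mathcal{O}_X$, computed via the normalization of $X^{red}$, so that $\delta(X)=\delta(X^{red})-\epsilon(X)$.

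The key step is then the Buchweitz–Greuel type formula $\mu(X)=2\delta(X)-r+1$ for any flat smoothing of a generically reduced curve. I would prove it by the usual argument: compare $\chi$ of the normalization of the special fibre with $\chi$ of the general fibre through the constancy of $\chi(\mathcal{O})$ in the flat family $D(F)$. With this formula,
\[
\mu(D(f)^{red})=2\delta(D(f)^{red})-r+1=\mu(D(f))+2\epsilon(D(f)).
\]
Combining this with (a) gives (b).

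\textbf{Main obstacle.} I expect the hardest part to be justifying this last formula, and the topological definition of $\mu$ behind it, for non-reduced $D(f)$. One must check that the Euler characteristic computation sees the embedded component with exactly the coefficient $2$, and that the Milnor fibre is well defined, i.e. independent of the chosen smoothing. I would handle the second point by noting that $D(F)$ is determined by $f$, since it does not depend on the choice of $F$, and by following the corresponding arguments in \cite{victor-juanjo} and \cite[Sec. 5.2]{tese}.
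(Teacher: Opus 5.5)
Your proposal is correct and follows essentially the paper's route. You take a stabilization and use that $D^2(F)$ and $D(F)$ are flat smoothings whose general fibres are identified by the biholomorphism $D^2(f_t)\to D(f_t)$. Applying the Buchweitz--Greuel/Br\"ucker--Greuel formula $\mu(X_0)=1-\chi(X_t)$ gives (a), and (b) follows from the Br\"ucker--Greuel relation between $\mu(D(f))$ and $\mu(D(f)^{red})$. The paper simply cites these curve results rather than reproving them. The ``main obstacle'' you flag, including independence of the smoothing, is exactly what Br\"ucker--Greuel's $\mu=2\delta-r+1$ with $\delta=\delta(X^{red})-\epsilon$ already provides. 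You should rely on that rather than on $D(F)$ being independent of $F$.

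Your sign $\mu(D(f)^{red})=\mu(D(f))+2\epsilon(D(f))$ is the right one. The paper's proof of (b) writes this relation with the two Milnor numbers interchanged, but only your version is consistent with the statement and with the worked example, where $7=1+2\cdot 3$.
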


\begin{proof}
    (a) Let $F=(f_t,t)$ be a stabilization of $f.$ By \cite[Prop. 2.6]{victor-juanjo}, we have that the projections $\pi:D^2(F)\rightarrow(\mathbb{C},0)$ and $\pi:D(F)\rightarrow(\mathbb{C},0)$ are unfoldings of the plane curves $D^2(F)$ and $D(F),$ respectively. Fix a representative $F:U\times T\rightarrow \mathbb{C}^{2n-1}\times T$, where $U$ and $T$ are neighborhoods at the origin in $\mathbb{C}^n$ and $\mathbb{C},$ respectively, such that $\pi:D^2(F)\rightarrow T$ and $\pi: D(F)\rightarrow T$ are good representatives, in the sense of \cite{Brucker} and \cite{buch}. Thus, for all $t\in D,$ by \cite[Th. 4.2.2]{buch} and \cite[Lemma 3.1.2]{Brucker}, we have that \begin{equation}\label{D^2}
        \mu(D^2(f),0)-\sum_{(x,x')\in D^2(f_t)}\mu(D^2(f_t),(x,x'))=1-\chi(D^2(f_t))
    \end{equation} and \begin{equation}\label{D}
        \mu(D(f),0)-\sum_{x\in D(f_t)}\mu(D(f_t),x)=1-\chi(D(f_t)),
    \end{equation} where $\chi(\textbf{Y})$ denotes the Euler characteristic of the complex space $\textbf{Y}.$ Since $F$ is a stabilization of $f,$ we have that $D^2(f_t)$ and $D(f_t)$ are smooth (see \cite[Prop. 2.6]{victor-juanjo}). Thus $$\sum_{(x,x')\in D^2(f_t)}\mu(D^2(f_t),(x,x'))=\sum_{x\in D(f_t)}\mu(D(f_t),x)=0.$$ By \cite[Cor. 2.4]{victor-juanjo}, we have that the projection $p_1:D^2(f_t)\rightarrow D(f_t)$ is a biholomorphism, for all $t\ne0$. Therefore, $\chi(D^2(f_t))=\chi(D(f_t)).$ Thus, by \eqref{D^2} and \eqref{D}, we conclude that $\mu(D^2(f),0)=\mu(D(f),0).$\\

    \noindent(b) The proof is analogous to item (a) using the formula $\mu(D(f),0)=\mu(D(f)^{red},0)+2\epsilon(D(f),0).$
\end{proof}

\begin{example}
    Let $f:(\mathbb{C}^3,0)\rightarrow(\mathbb{C}^5,0)$ be the corank 1 map germ given by $$f(x,y,z)=(x,y,z^3,z^4+xz,z^5+yz).$$ One can verify that $$D^2(f)=V(z^2+zw+w^2,x+z^3+z^2w+zw^2+w^3,y+z^4+z^3w+z^2w^2+zw^3+w^4)\subset\mathbb{C}^4,$$ while $D^3(f)=\left\{0\right\}\subset\mathbb{C}^5.$ Furthermore, we have that $$\mathcal{O}_{D^2(f)}\simeq\dfrac{\mathbb{C}\left\{z,w\right\}}{(z^2+zw+w^2)}.$$ Therefore, $D^2(f)$ is a reduced curve. By \rm{\cite[\textit{Prop.} 2.3]{victor-juanjo}} \textit{(see also} \rm{\cite[\textit{Th.} 2.14]{marar-mond}}\textit{), we conclude that $f$ is finitely determined. Now, note that $D^2(f)$ has two irreducible components and has a Morse singularity. Thus, $\mu(D^2(f),0)=1.$ By Theorem} \rm{\ref{TEO2}}\textit{, we conclude that $\mu(D(f),0)=1.$}

    Now, let $p:\mathbb{C}^3\times\mathbb{C}^3\rightarrow\mathbb{C}^3$ be a projection onto the first factor. Consider the restriction of $p$ to $D^2(f).$ A presentation matrix of $p_*\mathcal{O}_{D^2(f)}$ as a $\mathcal{O}_3$-module is given by $$\left(\begin{array}{cc}
       x+z^3  & 0 \\
        0 & x+z^3\\
        y+z^4 & -x\\
        xz^2 & xz+y+z^4
    \end{array}\right).$$ Then, \begin{eqnarray*}
        D(f)&=&V(Fitt_0(p_*\mathcal{O}_{D^2(f)}))\\
        &=&V(xy+yz^3+xz^4+z^7,x^2z^2+xz^5,x^2+2xz^3+z^6,xy+x^2z+yz^3+2xz^4+z^7,\\
        & &-x^2-xz^3,-y^2-xyz-x^2z^2-2yz^4-xz^5-z^8).
    \end{eqnarray*} Note that $D(f)$ is a generically reduced curve. Furthermore, $$D(f)^{red}=V(x+z^3,y^2+yz^4+z^8).$$ Thus, $D(f)^{red}$ has two smooth components, given by $$D(f)^{red}_1=V(x+z^3,ay-bz^4)$$ and $$D(f)^{red}_2=V(x+z^3,by-az^4),$$ where $a=(-1+i\sqrt{3})/2$ and $b=(-1-i\sqrt{3})/2.$ Therefore, $$dim_\mathbb{C}\dfrac{\mathbb{C}\left\{x,y,z\right\}}{(x+z^3,ay-bz^4,by-az^4)}=4$$ and by \cite[Cor. 1.2.3]{buch} we have that $\mu(D(f)^{red},0)=7.$ On the other hand, one can verify that $\epsilon(D(f),0)=3.$ By Theorem \ref{TEO2}, we conclude that $\mu(D^2(f),0)=1.$
\end{example}

\begin{remark}
        The authors used Surfer software \rm{\cite{surfer}} \textit{to create the figures in the text}.
    \end{remark}



\section*{Acknowledgments}	
$ \ \ \ \ $ This work constitutes a part of the second author's Ph.D. thesis at the Federal University of Paraíba under the supervision of Otoniel Nogueira da Silva to whom he would like to express his deepest gratitude. The first author was supported by Conselho Nacional de Desenvolvimento Científico e Tecnológico (CNPq) under grants Universal 407454/2023-3 and 400043/2025-4. The second author was supported by Coordenação de Aperfeiçoamento de Pessoal de Nível Superior (CAPES). 

\end{document}